\newif\ifpdf
\newtheorem{lemma}{Lemma}
\newtheorem{theorem}{Theorem}
\newtheorem{definition}{Definition}
\newtheorem{example}{Example}
\newtheorem*{proof}{Proof:}
\newtheorem{remark}[theorem]{Remark}
\newtheorem{proposition}{Proposition}
\def\R{{\mathbb R}}
\def\P{{\mathbb P}}
\begin{document}
\begin{frontmatter}
	\title{ A note on Harnack and Transportation inequalities For
	Stochastic Differential Equations with reflections. }
	
	

	
	\author[1]{Brahim Boufoussi}
	\ead{boufoussi@ucam.ac.ma}
	
	\author[1]{Soufiane Mouchtabih}
	\ead{soufiane.mouchtabih@gmail.com}
	
	\address[1]{ Department of Mathematics, Faculty of Sciences Semlalia, Cadi Ayyad University, 2390 Marrakesh, Morocco}

\begin{abstract}
We establish transportation cost inequalities, with
respect to the uniform and $L_2$-metric, on the path space of
continuous functions, for laws of solutions of
stochastic differential equations with reflections. We also consider the
case of stochastic differential equations involving local times.
Harnack inequalities for the associated semigroups are also established.    
\end{abstract}
	
\begin{keyword}
Reflected diffusion; Local time; Girsanov transformation;
Transportation inequality; Harnack inequality.\\
{\sl Mathematical Subject Classification-MSC2010. 60J60; 60J55; 60E15; 47G20}
\end{keyword}

\end{frontmatter}
\section{Introduction}
 
Let $(E,d)$ be a metric space equipped with a $\sigma-$field $\mathcal{B}$
such that $d(.,.)$ is $\mathcal{B}\times\mathcal{B}$ measurable.
Given $p\geq 1$ and two probability measures $\mu$ and
$\nu$ on $E$, we define the Wasserstein distance of order $p$
between $\mu$ and $\nu$ by
$$\displaystyle W_p^d(\mu,\nu)=\inf_{\pi \in \Pi (\mu,\nu)}
\left(\int_{E\times E} d(x,y)^p d \pi(x,y)\right)^{1/p} ,$$
where $\Pi (\mu,\nu)$ is the set of all probability measures on the product
space $E \times E$ with marginals $\mu$ and $\nu$. The relative entropy of $\nu$ with respect
to $\mu$  is defined as
$$
H(\nu / \mu)=
\left\{
\begin{array}{rl}
\int_E \ln \frac{d \nu }{d \mu}\,d\nu,&  \mbox{\,\,if\,\,} \nu \ll \mu \\
+\infty & \mbox{\,\,otherwise}\,.
\end{array}
\right.
$$

The probability measure $\mu$ satisfies the $L^p-$transportation inequality
on $(E,d)$ if there exists a constant $C\geq 0$, such that for any
probability measure $\nu$,
$$W_p^d(\mu,\nu)\leq \sqrt{2C H(\nu / \mu)}.$$
We shall write $\mu \in T_p (C)$ for this relation.
It's well known that the cases $"p=1"$ and $"p=2"$ are the most interesting cases. 
$T_1(C)$ is related to concentration of measure phenomenon and
well characterized, as it was shown by \cite{djellout} using
preliminary results obtained in \cite{bob}.
Since Talagrand's paper 
(see \cite{talag}), where $T_2(C) $ inequality has been established for 
Gaussian measure, several works have emerged. \cite{feyel1} and (2004) generalized Talagrand's inequality to
an abstract Wiener space. Moreover, many different arguments have been developed to establish the
transportation inequalities. The most used method is the Girsanov transformation
argument, introduced in \cite{talag} and efficiently applied by many authors,
see, e.g., \cite{Wuzhang} for infinite-dimensional dynamical systems,
\cite{ustunel} for multi-valued SDEs and
singular SDEs, \cite{saus} for SDEs driven
by a fractional Brownian motion and \cite{li2} for stochastic delay evolution equations driven by fractional Brownian motion. Recently, \cite{Riedel} investigated the transportation inequality for the law of SDE driven by general Gaussian processes by using Lyons' rough paths theory. The first author studied in \cite{boufoussi} $T_2(C)$ inequality, with respect to $L^2 -$metric, for the stochastic heat equation driven by space-time white noise and driven by fractional noise.\\  
Even if $ T_2(C) $ is not well characterized it has many interesting properties. $T_2(C)$ is stronger than $ T_1(C) $ and it has the dimension free tensorization property. The property $T_2(C)$ is also intimately linked to many other functional properties such as concentration of measure phenomenon, Poincar\'e inequality, logarithmic Sobolev inequality and 
Hamilton-Jacobi equations. In their famous paper
\cite{otto villani 2000} showed that in a smooth
Riemannian setting, the logarithmic Sobolev inequality implies
$T_2(C)$, whereas $T_2(C)$ implies the Poincar\'e's inequality.   

In this paper, we first consider the following
reflected stochastic differential equations (RSDEs):
\begin{equation}\label{reflectedeq}
dX(t)=b(X(t))dt+\sigma(X(t))dB(t)-d\eta(t)
\,;\,\,\,\,\, X(0)=x\in{\mathcal O}\,,
\end{equation}
where $\eta(t)$ is a process with bounded variations forcing $X$
to stay inside a given regular domain $\mathcal O\subset\R^d$.
There is a rich literature on RSDEs. They  arise as a model of
some phenomena with constraints, and are useful in a variety domains of
applications, such as control theory, games theory and financial mathematics,
see for example \cite{appSoner}, \cite{appKruk}, \cite{appRamas}...). Otherwise,
they allow to give probabilistic representations for elliptic and parabolic partial
differential equations with Neumann type and/or mixed boundary conditions,
(see \cite{Freidlin}, \cite{Talay} and \cite{Brillinger2002}, etc.).
The existence and uniqueness of solutions for RSDEs of type (\ref{reflectedeq}) were first
investigated by Skorohod (see e.g. (\cite{Skorhod 1962}).  After,
many works related to reflected solutions to SDEs have been done.
Among others we cite the works of \cite{Tanaka 1979},  \cite{RSDEMen},
\cite{StrkVard}, ...etc.\\
Our first aim is to investigate the properties $T_1(C)$ and $T_2(C)$ w.r.t. uniform
and $L_2 -$metrics for the solutions of such equations.
As a consequence, we deduce a useful 
concentration inequality satisfied by the law of the solution $X$. And under
suitable regularity assumptions on the coefficients, we give an
estimation for the Wasserstein distance between the transition density
of the solution $X$ and its associated stationary distribution. Furthermore, by
means of the coupling and Girsanov transformation arguments,
we show a log-Harnack and Harnack inequalities for the operator semigroup
$$P_tf(x)=\mathbb{E}f(X^x(t))\,,\,\,\, t\geq0\,,$$
where $X^x$ is the solution of (\ref{reflectedeq}) with $X^{x}({0})=x$,
and $f$ is a bounded positive measurable function.

On the other hand, the second part of this paper concerns
stochastic differential equations involving local times (SDELs): 
\begin{equation}\label{LT0}
dX(t)=b(X(t))dt+\sigma(X(t))d{B}(t)+\int_{\mathbb{R}}\nu(dx)\,
dL_t^x(X)\,,
\end{equation}
where $\nu$ is a bounded measure on $\R$ and $L_t^x(X) $ is
the symmetric local time at $x\in\R$ of the unknown process
$(X_t)_{t\geq 0}$. These equations appeared first in the work of \cite{StroYor}
and were subsequently developed by many other authors. The necessary and sufficient conditions for pathwise uniqueness property of SDELs are given in \cite{LeGall 1983}  (see also \cite{EngeSchm}).
We know that in some special situations (when $\sigma$ and $b$ are smooth and $\nu=\beta\delta_{a} $), the solutions of
these equations are related, by mean of Feynman-Kac formula,
to parabolic differential equations with transmission conditions.
An interesting case is obtained when $b=0$,
$\sigma=1$ and $\nu=\beta\,\delta_0 $ with $\beta\in (-1,1)$,
the solution $X$ becomes the so-called Skew Brownian motion
introduced and studied in \cite{HarShep}.\\ 
Our second goal in this work is to prove transportation cost
inequalities for the solutions of SDELs, 
Moreover, as for the RSDE case we give, under a dissipativity condition,
an estimation of the Wasserstein distance between the transition
density of $X$ and its unique invariant measure. We prove also a Harnack
inequality for the corresponding semigroup. We would like to point out here that the skew Brownian is not covered by our result and that the investigation of the inequality $T_2$  remains open and interesting in mathematical
point of view.\\

The rest of this paper is organized as follows, In section 2, we recall
a result of existence and uniqueness for RSDEs \ref{reflectedeq} via
penalization method, we investigate inequalities
$T_1(C)$ and $T_2(C)$ w.r.t. uniform and $L_2$-metric for laws of
the solutions and we present a Harnack inequality for the associated
semigroup. In the last section, by using a stability argument of transportation
inequalities, we prove the property $T_2(C)$ for the law of the
solution of the equation ($(\ref{LT0})$) with respect to the $L_2$ and
uniform distance on $\mathcal{C}([0,T],\mathbb{R})$.
The Harnack inequality is also proved.
    
\section{Reflected stochastic differential equations}

Let $(B_t, t\ge 0)$ be a standard $d$-Brownian
motion ($d\geq1$), defined on a filtred probability space
$\Big(\Omega,\mathcal{F},\,(\mathcal{F})_{t\ge 0},\, P\Big)$
satisfying the usual conditions. Let $\mathcal{O}$ be a bounded convex domain
in $\mathbb{R}^d$ and $\bar{\mathcal{O}}$ denotes its
closure. Consider the normal reflected diffusion on
$\bar{\mathcal{O}}$ described as:
\begin{equation}\label{RSDE}
\left\{
\begin{aligned}
dX(t)&=b(X(t))dt+\sigma(X(t))dB(t)-d\eta(t),\\
X(0)&=x\\
\end{aligned}
\right.
\end{equation}
where $x\in \bar{\mathcal{O}}$, $ b:\mathbb{R}^d\mapsto
\mathbb{R}^d$ and $\sigma : \mathbb{R}^d\to \mathcal{M}_{d \times d}(\mathbb{R}) $
are Borel measurable functions. For a fixed horizon time $T>0$, we have:
\begin{definition}\label{def of sol of RSDE}
	A strong solution of the equation
	$(\ref{RSDE})$ on $[0,T]$ is a pair of adapted continuous
	processes $\big(X,\eta\big)$ such that:
	\begin{enumerate}
		\item $X$ takes values in the closure
		$\bar{\mathcal{O}}$ and $\eta$ has
		locally bounded variation with $\eta(0)=0$.
		\item For every  adapted continuous
		process $Y(t)$ taking values in the closure $\bar{\mathcal{O}}$
		we have 
		\begin{equation}\label{Inequality ref}
		\int_0^t <X(s)-Y(s),d\eta(s)> \ge 0
		\end{equation} 
	\end{enumerate}
	and
	\begin{equation*}
	X(t)=x+\int_0^tb(X(s))ds+\int_0^t\sigma(X(s))dB(s)-\eta(t)
	\end{equation*}
\end{definition}
We will make use of the following assumptions:

\begin{description}
	\item[$H(1)$ ]  $b$ and $\sigma$ are locally Lipschitz on
	$\mathbb{R}^d$ .
	\item[$H(2)$ ] There exists a constant $M>0$ such that for every
	$x\in \mathbb{R}^d$
	$$|b(x)|^2+|\sigma(x)|^2\le M(1+|x|^2)\,,$$
\end{description}
where $|.|$ denotes the Euclidean norm on $\R^d$.
The stochastic variational inequality $(\ref{RSDE})$ can be
approximated by means of a classic penalization method applied to
a stochastic differential equation, defined on the whole space
$\mathbb{R}^d$. Without loss of generality, one can assume that
the coefficients $b $ and $\sigma$ are defined on the whole space
$\mathbb{R}^d$, even if they need to be defined only on the closure $\bar{\mathcal{O}}$.\\
Define the penalty function $\beta(x):=x-\mathcal P_\mathcal{O}(x)$, where $P_\mathcal{O}$ is the orthogonal projection on $\bar{\mathcal{O}}$, and for every $\varepsilon > 0$, consider the stochastic differential equation:
\begin{equation}\label{penelazed equation}
\left\{
\begin{aligned}
dX_\varepsilon(t)&=b(X_\varepsilon(t))dt+\sigma(X_\varepsilon(t))dB(t)-\frac{1}{\varepsilon}\beta(X_\varepsilon(t))dt,\\
X(0)&=x.\\
\end{aligned}
\right.
\end{equation}
Since $\beta $ is Lipschitz continuous, under assumptions $H(1)$
and $H(2)$ there exists a unique strong solution $ X_\varepsilon$ of
$(\ref{penelazed equation})$. According to \cite{MenRob} we have the following convergence result 
\begin{theorem}\label{conv}
	Under the assumptions $H(1)$ and $H(2)$,
	there exists a unique solution $((X(t),\eta(t)): t\in [0,T])$
	of the stochastic variational inequality as described by Definition $(\ref{def of sol of RSDE})$. Moreover for every $T>0$, the following convergence holds in probability
	\begin{eqnarray}
	\underset{0\le t\le T}{sup}\{|X_\varepsilon(t)-X(t)|+|\eta_\varepsilon(t)-\eta(t)|\}\to 0\quad \text{as}\quad \varepsilon \to 0,
	\end{eqnarray}
	where $X_\varepsilon$ is the solution of the stochastic
	differential equation $(\ref{penelazed equation})$, and 
	\begin{equation*}
	\eta_\varepsilon(t):=\frac{1}{\varepsilon}\int_0^t\beta(X_\varepsilon(s))\, ds.
	\end{equation*}
\end{theorem}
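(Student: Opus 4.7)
The plan is to construct the solution as the limit of the penalized approximations and simultaneously establish uniqueness via the variational formulation. I would split the argument into four stages.

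First, I would derive uniform-in-$\varepsilon$ a priori estimates. The key algebraic fact is that, since $\mathcal{O}$ is convex, the penalty function $\beta(x)=x-\mathcal{P}_\mathcal{O}(x)$ is monotone and satisfies $\langle \beta(x), x-y\rangle \geq |\beta(x)|^2$ for every $y\in\bar{\mathcal{O}}$. Applying It\^o's formula to $|X_\varepsilon(t)-x_0|^2$ for a fixed $x_0\in\bar{\mathcal{O}}$, using $H(2)$, Burkholder-Davis-Gundy, and Gronwall, one obtains a uniform bound on $\mathbb{E}[\sup_{t\leq T}|X_\varepsilon(t)|^2]$ and, as a byproduct, on $\frac{1}{\varepsilon}\mathbb{E}\int_0^T|\beta(X_\varepsilon(s))|^2\,ds$. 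The latter bound controls $\mathbb{E}[|\eta_\varepsilon|_{TV([0,T])}]$ uniformly in $\varepsilon$.

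Second, I would prove that $\{X_\varepsilon\}$ is Cauchy in probability on $C([0,T],\mathbb{R}^d)$. For $\varepsilon_1,\varepsilon_2>0$, apply It\^o's formula to $|X_{\varepsilon_1}(t)-X_{\varepsilon_2}(t)|^2$. The cross term $-\frac{1}{\varepsilon_1}\langle\beta(X_{\varepsilon_1})-\beta(X_{\varepsilon_2}),X_{\varepsilon_1}-X_{\varepsilon_2}\rangle$ (plus its symmetric) is handled using monotonicity of $\beta$: $\langle\beta(x)-\beta(y),x-y\rangle\geq 0$. The remaining penalty terms $-\tfrac{1}{\varepsilon_i}\langle\beta(X_{\varepsilon_i}),X_{\varepsilon_i}-X_{\varepsilon_{3-i}}\rangle$ are bounded using $|X_{\varepsilon_{3-i}}-\mathcal{P}_\mathcal{O}(X_{\varepsilon_i})|$ combined with the Step 1 bounds on $\frac{1}{\varepsilon_i}\int_0^T|\beta(X_{\varepsilon_i})|^2\,ds$, producing a term that vanishes as $\varepsilon_1,\varepsilon_2\to 0$. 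The Lipschitz assumption $H(1)$ (localized via a stopping argument to handle the locally Lipschitz coefficients) together with Gronwall closes the estimate. Simultaneously, $\eta_\varepsilon(t)=x+\int_0^t b(X_\varepsilon)ds+\int_0^t\sigma(X_\varepsilon)dB-X_\varepsilon(t)$ converges in probability uniformly in $t$.

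Third, I would identify the limit $(X,\eta)$ as a solution of the variational inequality. Continuity and the a priori bounds imply that $X$ takes values in $\bar{\mathcal{O}}$: indeed, $|\beta(X_\varepsilon(t))|^2=|X_\varepsilon(t)-\mathcal{P}_\mathcal{O}(X_\varepsilon(t))|^2=\mathrm{dist}(X_\varepsilon(t),\bar{\mathcal{O}})^2$, and the $L^2$-bound on $\frac{1}{\varepsilon}\int_0^T|\beta(X_\varepsilon)|^2 ds$ forces $\mathrm{dist}(X(t),\bar{\mathcal{O}})=0$. To obtain the reflection inequality~(\ref{Inequality ref}), for an adapted continuous process $Y$ with values in $\bar{\mathcal{O}}$, one writes
\begin{equation*}
\int_0^t\langle X_\varepsilon(s)-Y(s),d\eta_\varepsilon(s)\rangle=\frac{1}{\varepsilon}\int_0^t\langle\beta(X_\varepsilon(s)),X_\varepsilon(s)-Y(s)\rangle ds\geq 0,
\end{equation*}
by the convexity inequality recalled in Step~1, and passes to the limit using the uniform convergence established in Step~2.

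Finally, uniqueness follows directly from the variational inequality: if $(X^{(1)},\eta^{(1)})$ and $(X^{(2)},\eta^{(2)})$ are two solutions, use $Y=X^{(2)}$ in the inequality for $\eta^{(1)}$ and vice versa, add them, and apply It\^o's formula to $|X^{(1)}-X^{(2)}|^2$; the $d\eta$-terms combine into a nonpositive contribution, and $H(1)$ together with Gronwall yields $X^{(1)}=X^{(2)}$. The main technical obstacle is the Cauchy step: balancing the singular $1/\varepsilon$ penalties against each other requires exploiting both the monotonicity of $\beta$ and the sharp Step~1 estimate on $\frac{1}{\varepsilon}\int_0^T|\beta(X_\varepsilon)|^2 ds$, and handling the locally Lipschitz coefficients necessitates a stopping-time localization with care to preserve all uniform bounds.
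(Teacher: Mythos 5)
First, note that the paper itself gives no proof of this theorem: it is quoted from Menaldi--Robin (1985), so your proposal has to be measured against the classical penalization argument, which is indeed the route you take. The overall architecture (uniform a priori bounds, Cauchy-in-$\varepsilon$ estimate, identification of the limit as a solution of the variational inequality, uniqueness by plugging one solution into the inequality of the other) is the right one, and Steps 3 and 4 are essentially sound.

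There is, however, a genuine gap at the hinge of your argument. In Step 1 you claim that the uniform bound on $\frac{1}{\varepsilon}\mathbb{E}\int_0^T|\beta(X_\varepsilon(s))|^2\,ds$ ``controls $\mathbb{E}\,|\eta_\varepsilon|_{TV}$ uniformly in $\varepsilon$''. It does not: since $|\eta_\varepsilon|_{TV}=\frac{1}{\varepsilon}\int_0^T|\beta(X_\varepsilon(s))|\,ds$, Cauchy--Schwarz only yields a bound of order $\varepsilon^{-1/2}$ from that $L^2$ estimate. This matters because your Step 2 is exactly where this missing control is needed: after using monotonicity of $\beta$, the surviving cross terms are of the type $\frac{1}{\varepsilon_1}\int_0^T|\beta(X_{\varepsilon_1})|\,|\beta(X_{\varepsilon_2})|\,ds$, and estimating them with the Step 1 bound alone gives only $O(\sqrt{\varepsilon_2/\varepsilon_1})$, which does not tend to zero as $\varepsilon_1,\varepsilon_2\to0$ independently; so the Cauchy property does not follow as written. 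The standard repair is precisely the interior-point inequality that the paper records as condition (\ref{cond1}): choosing $a$ and $c>0$ with $B(a,c)\subset\mathcal{O}$ one has $\langle x-a,\beta(x)\rangle\ge c\,|\beta(x)|$, and applying It\^o's formula to $|X_\varepsilon(t)-a|^2$ then gives the uniform bound $\frac{1}{\varepsilon}\mathbb{E}\int_0^T|\beta(X_\varepsilon(s))|\,ds\le C$, i.e. a genuine uniform total-variation bound on $\eta_\varepsilon$. Combined with a separate estimate showing $\mathbb{E}\sup_{t\le T}\mathrm{dist}(X_\varepsilon(t),\bar{\mathcal{O}})^2\to0$, the cross terms can then be dominated by $\sup_{t\le T}|\beta(X_{\varepsilon_2}(t))|\cdot|\eta_{\varepsilon_1}|_{TV}$ and do vanish. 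The same uniform TV bound is also needed, and not supplied by your argument, in Step 3: to pass to the limit in $\int_0^t\langle X_\varepsilon(s)-Y(s),d\eta_\varepsilon(s)\rangle\ge0$ and to identify the limit $\eta$ as a process of bounded variation, you need uniform control of $|\eta_\varepsilon|_{TV}$ together with the uniform convergence of $X_\varepsilon$ and $\eta_\varepsilon$. So the skeleton is correct, but the estimate you rely on is the wrong one; the proof goes through once it is replaced by the $\langle x-a,\beta(x)\rangle\ge c|\beta(x)|$ estimate highlighted in the paper's remark.
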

\begin{remark}
When the convex domain $\mathcal{O}$ is unbounded, Theorem \ref{conv} is
still valid under an 
additional technical assumption, namely, there exists a point $ a\in \mathbb{R}^d$ 
and a constant $c>0$ such that
\begin{equation}\label{cond1}
\quad <x-a,\beta(x)>\ \ge\  c\,|\beta(x)|,\ \forall \, x\in \mathbb{R}^d\,.
\end{equation}
Note that if $\mathcal{O}$ is bounded then the inequality
$(\ref{cond1})$ is satisfied, we will use it in the proof of
Theorem \ref{T1(c)}.\\
\end{remark}
We recall a stability property of $ T_p(C) $ under the weak convergence
of measures, which will be useful to prove the property $T_1(C)$ for
Equation $(\ref{RSDE}) $ (see \cite{djellout}).
\begin{lemma}\label{lemma stability under week cv}
	Let $(E,d)$ be a metric separable and complete space, and $(\mu_n,\mu)_{n\in
	\mathbb{N}}$ a family of probability measures on $E$. Assume that
	$\mu_n\,\in \, T_p(C)$ for all $n\, \in\, \mathbb{N}$ and $\mu_n\to \mu$
	weakly. Then $\mu\, \in \, T_p(C)$.
\end{lemma}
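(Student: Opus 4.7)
My plan is to reduce the assertion to well-known lower semicontinuity properties of the Wasserstein distance and of the relative entropy, and then to exhibit, for a given test measure $\nu$, an approximating sequence $\nu_n$ whose entropies against $\mu_n$ are controlled by $H(\nu/\mu)$.

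First I would fix a probability measure $\nu$ on $E$. If $H(\nu/\mu)=+\infty$ the inequality is trivial, so I may assume the entropy is finite, which in particular forces $\nu\ll\mu$. Writing $f:=d\nu/d\mu$, the goal is to build, for each $n$, a probability measure $\nu_n\ll\mu_n$ whose density against $\mu_n$ is a well-chosen version of $f$.

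The natural candidate is
\[
\nu_n := \frac{f}{Z_n}\,\mu_n, \qquad Z_n := \int_E f\,d\mu_n,
\]
which is legitimate provided $f$ is bounded and continuous. In that case the elementary identities
\[
\int_E g\,d\nu_n = \frac{1}{Z_n}\int_E fg\,d\mu_n, \qquad H(\nu_n/\mu_n) = \frac{1}{Z_n}\int_E f\log f\,d\mu_n - \log Z_n,
\]
combined with the weak convergence $\mu_n\to\mu$ (applied both to $f$ and to the continuous bounded function $f\log f$), yield $Z_n\to 1$, $\nu_n\to\nu$ weakly, and $H(\nu_n/\mu_n)\to H(\nu/\mu)$. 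For a merely measurable density $f$, I would first approximate $f$ by a sequence of bounded continuous probability densities $f^{(k)}$ on $(E,\mu)$ with $\int f^{(k)}\log f^{(k)}\,d\mu\to H(\nu/\mu)$ (this is possible because $E$ is Polish so $\mu$ is Radon and the bounded continuous functions are dense in the Orlicz-type space corresponding to $x\log x$), apply the construction above for each $k$, and then extract a diagonal subsequence $\nu_{n,k_n}$ still enjoying the required weak convergence and the $\limsup$ bound on the entropy.

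Once the approximations $\nu_n$ are in hand, I apply the hypothesis $\mu_n\in T_p(C)$ to obtain $W_p^d(\mu_n,\nu_n)\le \sqrt{2C\,H(\nu_n/\mu_n)}$. The proof then closes by invoking the lower semicontinuity of $W_p^d$ with respect to joint weak convergence on a Polish space:
\[
W_p^d(\mu,\nu) \le \liminf_{n\to\infty} W_p^d(\mu_n,\nu_n) \le \liminf_{n\to\infty}\sqrt{2C\,H(\nu_n/\mu_n)} \le \sqrt{2C\,H(\nu/\mu)}.
\]
The main technical obstacle is the density-approximation step for general measurable $f$, where one must simultaneously secure weak convergence of $\nu_n$ to $\nu$ and a $\limsup$-control of the entropies. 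Once this is handled, the rest of the argument is a direct combination of the definition of $T_p(C)$ with the two semicontinuity facts.
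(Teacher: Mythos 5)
The paper does not actually prove this lemma: it is recalled verbatim from Djellout--Guillin--Wu (2004) with a citation, so there is no in-paper argument to compare yours against. Judged on its own, your proposal is the standard route and is essentially correct: reduce to finite entropy, transplant the density $f=d\nu/d\mu$ onto $\mu_n$ after normalisation, use that for $f$ bounded continuous the quantities $Z_n=\int f\,d\mu_n$, $\int f\log f\,d\mu_n$ and $\int gf\,d\mu_n$ all converge by weak convergence, apply $T_p(C)$ for $\mu_n$, and conclude by joint lower semicontinuity of $W_p^d$ on a Polish space (which is where separability and completeness enter). The constant $C$ survives because only $\liminf$/$\limsup$ are taken.

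The one place you should tighten is the reduction from measurable $f$ to bounded continuous densities. Your appeal to density of $C_b$ in an ``Orlicz-type space for $x\log x$'' is the hand-wavy step; the cleaner and fully elementary route is: first truncate, $f_M:=\min(f,M)/c_M$ with $c_M=\int\min(f,M)\,d\mu\to1$, noting that $\int\min(f,M)\log\min(f,M)\,d\mu\to\int f\log f\,d\mu$ by monotone convergence, and then approximate the bounded $f_M$ in $L^1(\mu)$ by continuous functions with the same uniform bound (Lusin/Tietze, $\mu$ being Radon on a Polish space), taking positive parts and renormalising; uniform boundedness then gives both total-variation convergence of the approximating measures and convergence of the entropies. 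Two further small points: you need $Z_n>0$, which holds only for $n$ large (harmless since $Z_n\to1$), and the diagonal extraction requires metrisability of weak convergence (Lévy--Prokhorov), or can be avoided altogether by a two-step limit: first prove $W_p^d(\mu,\nu')\le\sqrt{2CH(\nu'/\mu)}$ for all $\nu'$ with bounded continuous density by letting $n\to\infty$, then let the approximation index go to infinity and use lower semicontinuity of $W_p^d$ once more. With these adjustments your argument is complete.
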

We make the following assumptions: there exist $A\,,\, B>0 $ such that
\begin{equation}
\underset{x\in\,\mathbb{R}^d}{\sup}\|\sigma(x)\|_{HS}\le A,\quad
<y-x\, ,\,b(y)-b(x)>\,\le B\,(1+|y-x|^2),\ \forall x,y\, \in \mathbb{R}^d\,,
\label{Co1}
\end{equation}
where $\|.\|_{HS}$ is the Hilbert-Schmidt norm.
\begin{theorem}\label{T1(c)}
Suppose that $H(1)$, $H(2)$ and the conditions (\ref{Co1}) hold. Let
$\mathbb{P}_{X}$ be the law of the solution $ X$ of $(\ref{RSDE})$,
with initial point $X_{0}=x\in \bar{\mathcal{O}} $. Then for each  $T>0$
there exists some constant $C=C(T,A,B)$ independent of $x$ such that
$\mathbb{P}_X$
satisfies $T_1(C)$, on the space $\mathcal{C}\big([0,T],\mathbb{R}^d\big)$
equipped with the uniform metric 
$$d_\infty(\gamma_1,\gamma_2)=\underset{t\in [0,T]}{\sup}|\gamma_1(t)-\gamma_2(t)|.$$
\end{theorem}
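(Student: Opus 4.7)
The plan is to transfer the $T_1$ inequality from the penalized equation $(\ref{penelazed equation})$ to the reflected solution $X$ by combining Theorem~\ref{conv} with the stability statement in Lemma~\ref{lemma stability under week cv}. Concretely, I would first prove that $\mathbb{P}_{X_\varepsilon}$ satisfies $T_1(C)$ on $(\mathcal{C}([0,T],\mathbb{R}^d), d_\infty)$ with a constant $C = C(T,A,B)$ independent of $\varepsilon$ and of the initial point $x$, and then let $\varepsilon \to 0$.

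For the uniform $T_1$ estimate for $X_\varepsilon$, the key observation is that the penalty $\beta = I - \mathcal{P}_{\mathcal O}$ is monotone: since $\mathcal{P}_{\mathcal O}$ is the projection on the closed convex set $\bar{\mathcal O}$, it is non-expansive, which gives $\langle x-y,\beta(x)-\beta(y)\rangle \ge 0$ for all $x,y$. Combined with (\ref{Co1}), the penalized drift $b_\varepsilon := b - \tfrac{1}{\varepsilon}\beta$ satisfies the same one-sided Lipschitz bound as $b$,
\[
\langle y-x,\, b_\varepsilon(y)-b_\varepsilon(x)\rangle \le B(1+|y-x|^2),
\]
with \emph{the same} constant $B$, uniformly in $\varepsilon$. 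The diffusion $\sigma$ is unchanged and still bounded by $A$ in Hilbert-Schmidt norm. I would then run the Girsanov coupling argument of Djellout-Guillin-Wu: given $\mathbb{Q}\ll \mathbb{P}_{X_\varepsilon}$ with $H(\mathbb{Q}|\mathbb{P}_{X_\varepsilon})<\infty$, Girsanov provides a predictable $h$ with $H(\mathbb{Q}|\mathbb{P}_{X_\varepsilon}) = \tfrac12 \mathbb{E}_\mathbb{Q}\!\int_0^T |h_s|^2\,ds$ and a $\mathbb{Q}$-Brownian motion $\tilde B_t = B_t - \int_0^t h_s\,ds$. Define on the same space the auxiliary process
\[
dY(t) = b_\varepsilon(Y(t))\,dt + \sigma(Y(t))\,d\tilde B(t),\qquad Y(0) = x,
\]
whose $\mathbb{Q}$-law is $\mathbb{P}_{X_\varepsilon}$. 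Then $(X_\varepsilon,Y)$ under $\mathbb{Q}$ is a coupling of $\mathbb{Q}$ and $\mathbb{P}_{X_\varepsilon}$, so $W_1^{d_\infty}(\mathbb{P}_{X_\varepsilon},\mathbb{Q}) \le \mathbb{E}_\mathbb{Q}[\sup_{t\le T}|X_\varepsilon(t)-Y(t)|]$. Applying It\^o's formula to $|X_\varepsilon - Y|^2$, using the uniform one-sided Lipschitz bound on $b_\varepsilon$, the boundedness $\|\sigma\|_{HS}\le A$, Burkholder-Davis-Gundy, Cauchy-Schwarz and Gronwall's lemma (standard manipulations, with constants depending only on $T$, $A$, $B$) yields
\[
\mathbb{E}_\mathbb{Q}\Big[\sup_{t\le T}|X_\varepsilon(t)-Y(t)|\Big] \le \sqrt{2C\,H(\mathbb{Q}|\mathbb{P}_{X_\varepsilon})},
\]
hence $\mathbb{P}_{X_\varepsilon}\in T_1(C)$ with $C$ independent of $\varepsilon$ and $x$.

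Finally, Theorem~\ref{conv} gives $\sup_{t\le T}|X_\varepsilon(t)-X(t)| \to 0$ in probability, so $\mathbb{P}_{X_\varepsilon}\to \mathbb{P}_X$ weakly on the Polish space $(\mathcal{C}([0,T],\mathbb{R}^d),d_\infty)$, and Lemma~\ref{lemma stability under week cv} transfers $T_1(C)$ from the sequence to the limit $\mathbb{P}_X$. The main obstacle is precisely the \emph{uniformity} of the constant $C$ in $\varepsilon$: a naive application of a $T_1$ result based on the global Lipschitz constant of $b_\varepsilon$ would produce a constant blowing up like $1/\varepsilon$. The monotonicity of $\beta$ is what averts this: in the Itô estimate the penalization term contributes a non-positive quantity $-\tfrac{2}{\varepsilon}\langle X_\varepsilon-Y,\beta(X_\varepsilon)-\beta(Y)\rangle$, which can simply be discarded, leaving a bound governed only by $A$, $B$ and $T$.
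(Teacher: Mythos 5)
Your outer strategy is exactly the paper's: you observe that $\beta=I-\mathcal P_{\mathcal O}$ is monotone (correct, since the projection onto the closed convex set $\bar{\mathcal O}$ is non-expansive, this is the content of (\ref{cond1})), so the penalized drift $b-\tfrac1\varepsilon\beta$ satisfies (\ref{Co1}) with the same constant $B$ uniformly in $\varepsilon$; you then claim a $T_1(C)$ inequality for $\mathbb{P}_{X_\varepsilon}$ with $C=C(T,A,B)$ independent of $\varepsilon$, and pass to the limit using Theorem \ref{conv} and the stability Lemma \ref{lemma stability under week cv}. Those first and last steps coincide with the paper's proof, which at the middle step simply invokes Corollary 4.1 of \cite{djellout}.

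The gap is precisely in your re-derivation of that middle step by the Girsanov coupling method. Under the hypotheses of the theorem this argument does not close. In the It\^o estimate for $\hat X:=X_\varepsilon-Y$ one has
\begin{equation*}
d|\hat X(t)|^2=\Big[2\langle\hat X(t),\hat b(t)\rangle+\|\hat\sigma(t)\|^2_{HS}+2\langle\hat X(t),\sigma(X_\varepsilon(t))h(t)\rangle\Big]dt+2\langle\hat X(t),\hat\sigma(t)\,d\tilde B(t)\rangle,
\end{equation*}
and (\ref{Co1}) only gives $2\langle\hat X,\hat b\rangle\le 2B\bigl(1+|\hat X|^2\bigr)$, with the additive constant $2B$, while $\sigma$ is merely bounded (H(1) is only local Lipschitz, so there is no global Lipschitz constant), hence $\|\hat\sigma\|^2_{HS}$ can only be bounded by $4A^2$. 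After Cauchy--Schwarz, BDG and Gronwall you therefore obtain a bound of the form $\mathbb{E}_{\mathbb{Q}}\sup_{t\le T}|\hat X(t)|^2\le C_1(T,A,B)+C_2(T,A,B)\,\mathbb{E}_{\mathbb{Q}}\int_0^T|h(s)|^2ds$ with $C_1>0$ independent of $h$; this does not vanish as $H(\mathbb{Q}/\mathbb{P}_{X_\varepsilon})\to0$, so it cannot yield $W_1^{d_\infty}(\mathbb{P}_{X_\varepsilon},\mathbb{Q})\le\sqrt{2CH(\mathbb{Q}/\mathbb{P}_{X_\varepsilon})}$. The coupling/Girsanov route is exactly what the paper uses for the $T_2$ statements, but there it relies on the genuinely stronger hypotheses $H(3)$ (which controls $\|\hat\sigma\|^2_{HS}+2\langle\hat X,\hat b\rangle$ by $-2\delta|\hat X|^2$, with no additive constant) and, for the uniform metric, global Lipschitzness of $\sigma$. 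Under (\ref{Co1}) alone the correct tool is the one the paper cites: Corollary 4.1 of \cite{djellout}, whose proof rests on the Bobkov--G\"otze Gaussian-integrability characterization of $T_1$ (a uniform-in-$\varepsilon$ Gaussian tail estimate for $\sup_{t\le T}|X_\varepsilon(t)-x|$, obtained from boundedness of $\sigma$ and the one-sided growth of the drift), not on a coupling estimate. So either quote that corollary as a black box, or replace your coupling step by such a Gaussian tail argument; as written, the claimed inequality does not follow by ``standard manipulations'' from the stated assumptions.
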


\begin{proof}
Since by the inequality (\ref{cond1}) we have $ <x-y, \beta_{\varepsilon}(x)-
\beta_{\varepsilon}(y)> \geq 0 $ $\forall x, y $, $\forall\varepsilon$,
it is clear that
the coefficients of  the panellized equation $(\ref{penelazed equation})$
satisfy the assumptions of Corollary 4.1 
in \cite{djellout}, which ensures that 
for any $\varepsilon>0$, the law $\mathbb{P}_{X_{\varepsilon}} $ of
$ X_\varepsilon $ satisfies $T_1(C)$ for some constant
$C=C(T,A,B)$ independent of $\varepsilon$. By the stability argument of $T_1(C)$
under the weak convergence of measures, we conclude that $\mathbb{P}_{X} \in T_1(C)$.
\end{proof}
\begin{remark}
We can deduce several consequences of Theorem \ref{T1(c)}
\begin{enumerate}
\item For any Lipschitzian function $F:\mathcal{C}([0,T],\mathbb{R}^d)\to
\mathbb{R}$ we have, see Theorem 1.1 in \cite{djellout},
\begin{equation}
\mathbb{P}_X\big(F-E_{\mathbb{P}_X}F>r\big)\ \le\ \exp\big(-\frac{r^2}{2C\|F\|^2_{Lip}}\big)
\end{equation}
where, 
$$\|F\|_{Lip}=\underset{\gamma_1\ne \gamma_2}{\sup}\frac{|F(\gamma_1)-F(\gamma_2)|}
{d_\infty(\gamma_1,\gamma_2)}\,. $$
\item  We have the following concentration inequality $($see \cite{GozLeosurvey}$)$.
For all measurable $A\subset \mathcal{C}([0,T],\mathbb{R}^d)$ with $\mathbb{P}_X(A)\ge 1/2$, 
\begin{equation*}
\mathbb{P}_X(A^r)\ge 1-\exp(-\frac{r-r_0}{2C}),\quad r\ge r_0=\sqrt{2Clog(2)}
\end{equation*}
where $A^r$ is defined by
$$A^r:=\{\gamma\in \mathcal{C}([0,T],\mathbb{R}^d); d_\infty(\gamma,A)\le r\},\quad
r\ge 0.$$
\end{enumerate}
\end{remark}

In the sequel, let $\mathcal{F}_t:=\sigma(B(s),\, s\le t)\vee {\mathcal N}$,
where $\mathcal N$
is the class of all $\mathbb{P}-$ negligible sets. We will show that
the law $\mathbb{P}_X$ of the solution $X$ of the equation $(\ref{RSDE})$
satisfies $T_2(C)$ on the space
$\mathcal{C}\big([0,T],\mathbb{R}^d\big)$  with respect to the two
metrics 
$$ d_2(\gamma_1,\gamma_2)=\big(\int_0^T|
\gamma_1(t)-\gamma_2(t)|^2 dt\big)^{\frac{1}{2}}\,\,\,\text{and}\,\,\,\,\,\,
d_\infty(\gamma_1,\gamma_2)=\underset{t\in [0,T]}{\sup}|\gamma_1(t)-\gamma_2(t)|.$$
For this end, we assume the following dissipative condition:  
\begin{description}
\item[$H(3)$]\qquad  There is $\delta>0$ such that:
$$\|\sigma(x)-\sigma(y)\|^2_{HS}+2<x-y,b(x)-b(y)>\le 
-2\delta|x-y|^2,\ \forall x,y\in \mathbb{R}^d \,,$$ 
\end{description}
where $M^t$ and $tr(M)$ denote respectively the transpose and the trace
of a matrix $M$, and $|.|$ stands for the Euclidean norm on $\mathbb{R}^d$.
We will note by
$ \|\sigma\|_{\infty}=\displaystyle\sup_{x\in\R^d}\, \sup_{\|z\|\leq 1}
\vert\sigma(x)\, z\vert$.

\begin{theorem}\label{T2 w.r.t L2}
Suppose that $H(1)$, $H(2)$, $H(3)$ hold and
$\|\sigma\|_{\infty}<\infty$. Then for any initial point
$X_{0}= x\in \bar{\mathcal O}$
and $T>0$, the law $\mathbb{P}_X $ of the solution $X$ satisfies
$ T_2(\frac{\|
\sigma\|_\infty^2}{\delta})$ on $\mathcal{C}\big([0,T],\mathbb{R}^d\big)$ 
with respect to the metric $d_2$.
\end{theorem}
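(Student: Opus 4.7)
The plan is to mirror the proof of Theorem \ref{T1(c)}: establish $T_2$ at the level of the penalized approximation $(\ref{penelazed equation})$ with a constant independent of $\varepsilon$, and then pass to the limit via Lemma \ref{lemma stability under week cv}. The key observation is that the penalized drift $b_\varepsilon := b - \varepsilon^{-1}\beta$ inherits the dissipativity constant $\delta$ from $H(3)$. Since $\bar{\mathcal O}$ is closed and convex, the projection $P_{\bar{\mathcal O}}$ is firmly non-expansive, so $\beta = I - P_{\bar{\mathcal O}}$ is monotone, $\langle \beta(x)-\beta(y),\, x-y\rangle \ge 0$, and consequently
\[
\|\sigma(x)-\sigma(y)\|_{HS}^2 + 2\langle x-y,\, b_\varepsilon(x)-b_\varepsilon(y)\rangle \le -2\delta|x-y|^2
\]
uniformly in $\varepsilon > 0$, while $\|\sigma\|_\infty$ is unaffected.

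For the classical (unreflected) SDE $(\ref{penelazed equation})$, I would run the standard Girsanov--coupling argument, in the spirit of \cite{Wuzhang} and \cite{djellout}. Given $\mathbb{Q}\ll\mathbb{P}_{X_\varepsilon}$ with finite entropy, the martingale representation theorem yields a predictable $h$ such that $\tilde B_t := B_t - \int_0^t h_s\,ds$ is a $\mathbb{Q}$-Brownian motion and $\mathbb{E}_{\mathbb{Q}}\int_0^T |h_s|^2\,ds = 2H(\mathbb{Q}|\mathbb{P}_{X_\varepsilon})$. Letting $Y_\varepsilon$ denote the solution of $(\ref{penelazed equation})$ driven by $\tilde B$, the $\mathbb{Q}$-law of $Y_\varepsilon$ equals $\mathbb{P}_{X_\varepsilon}$, so $(X_\varepsilon, Y_\varepsilon)$ is a coupling between $\mathbb{Q}$ and $\mathbb{P}_{X_\varepsilon}$. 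Applying It\^o to $|X_\varepsilon(t)-Y_\varepsilon(t)|^2$, killing the martingale by $\mathbb{Q}$-expectation, using the dissipativity displayed above to absorb the drift and diffusion terms into $-2\delta\,\mathbb{E}_\mathbb{Q}|X_\varepsilon - Y_\varepsilon|^2$, and controlling the Girsanov cross-term $2\mathbb{E}_\mathbb{Q}\langle X_\varepsilon - Y_\varepsilon,\, \sigma(X_\varepsilon)h\rangle$ via Cauchy--Schwarz and Young's inequality together with $\|\sigma\|_\infty < \infty$, one derives a Gronwall-type differential inequality for $\phi(t) := \mathbb{E}_\mathbb{Q}|X_\varepsilon(t)-Y_\varepsilon(t)|^2$. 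Integration on $[0,T]$ yields
\[
W_2^{d_2}(\mathbb{Q},\mathbb{P}_{X_\varepsilon})^2 \le \mathbb{E}_\mathbb{Q}\!\int_0^T \!|X_\varepsilon(t)-Y_\varepsilon(t)|^2\,dt \le \frac{2\|\sigma\|_\infty^2}{\delta}\,H(\mathbb{Q}|\mathbb{P}_{X_\varepsilon}),
\]
with a constant independent of $\varepsilon$, i.e. $\mathbb{P}_{X_\varepsilon}\in T_2(\|\sigma\|_\infty^2/\delta)$ on $(\mathcal C([0,T],\mathbb R^d),d_2)$.

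Finally, Theorem \ref{conv} provides the uniform-in-probability convergence $X_\varepsilon \to X$, hence $\mathbb{P}_{X_\varepsilon} \Rightarrow \mathbb{P}_X$ weakly on $(\mathcal C([0,T],\mathbb{R}^d), d_\infty)$; since $d_2 \le \sqrt{T}\,d_\infty$, the same weak convergence holds on the Polish space $(L^2([0,T],\mathbb{R}^d), d_2)$, so Lemma \ref{lemma stability under week cv} delivers $\mathbb{P}_X\in T_2(\|\sigma\|_\infty^2/\delta)$. The main obstacle is the $\varepsilon$-uniformity of the constant in the second step: the approximation strategy only pays off if the $T_2$-constant for $X_\varepsilon$ does not blow up as $\varepsilon\downarrow 0$, which is exactly what the monotonicity of $\beta$ secures by preserving the modulus $\delta$. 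A secondary, bookkeeping, subtlety is to arrange the Gronwall estimate so that the Wasserstein constant depends only on $\delta$ (and $\|\sigma\|_\infty$) and not on the horizon $T$, which requires exploiting the exponential decay $e^{-\delta t}$ furnished by the dissipativity rather than a crude supremum bound.
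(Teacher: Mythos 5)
Your proposal is correct in substance, but it takes a genuinely different route from the paper. The paper does \emph{not} go through the penalized equation for $T_2$: it works directly with the reflected SDE. Given $\mathbb{Q}\ll\mathbb{P}_X$ of finite entropy, it pulls $\mathbb{Q}$ back to a measure $\tilde{\mathbb{Q}}$ on $\Omega$, invokes the result of Djellout--Guillin--Wu producing a predictable drift $\rho$ with $H(\mathbb{Q}/\mathbb{P}_X)=\tfrac12\mathbb{E}_{\tilde{\mathbb{Q}}}\int_0^T|\rho|^2dt$, couples $X$ with a second \emph{reflected} solution $Y$ driven by the Girsanov-shifted Brownian motion, and kills both reflection terms in the It\^o expansion of $|X-Y|^2$ using the defining variational inequality $\int_0^t\langle X(s)-Y(s),d\eta(s)\rangle\ge 0$; then $H(3)$, Cauchy--Schwarz with a parameter $\lambda$, and Gronwall finish the estimate. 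Your route instead extends the paper's own $T_1$ strategy (Theorem \ref{T1(c)}) to $T_2$: monotonicity of $\beta=I-P_{\bar{\mathcal O}}$ makes the penalized drift dissipative with the same modulus $\delta$ uniformly in $\varepsilon$, so the classical (non-reflected) Djellout--Guillin--Wu argument applies verbatim to $X_\varepsilon$, and Lemma \ref{lemma stability under week cv} transfers $T_2$ to the limit; your handling of the completeness issue for $d_2$ by viewing the laws on $L^2([0,T],\mathbb{R}^d)$ is the right fix, since $d_2\le\sqrt{T}\,d_\infty$ makes the weak convergence carry over. What each approach buys: yours is more modular (it quotes the unreflected result wholesale and never performs Girsanov or It\^o on the reflected system), while the paper's direct coupling is the template it then reuses for the $d_\infty$ case, the invariant-measure estimate and the Harnack inequalities, where the structural fact $\langle X-Y,d\eta_X-d\eta_Y\rangle\ge 0$ is exploited repeatedly. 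One bookkeeping caveat: the computation you sketch (Young with parameter $\lambda$, optimized at $\lambda=\delta$, then integrating the Gronwall bound in $t$) yields $W_2^{d_2}(\mathbb{Q},\mathbb{P}_{X_\varepsilon})^2\le \frac{\|\sigma\|_\infty^2}{\delta^2}\,\mathbb{E}\int_0^T|h|^2\,ds$, i.e.\ the constant $\|\sigma\|_\infty^2/\delta^2$ that the paper's own proof actually lands on, not the $2\|\sigma\|_\infty^2 H/\delta$ you assert; the discrepancy between $\delta$ and $\delta^2$ already exists between the paper's statement and its proof, but your sketch as written does not deliver the sharper denominator, so you should either state $\delta^2$ or justify the improvement separately.
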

\begin{proof}
The first part of the proof follows the argument of \cite{djellout}. The idea is to express
the finiteness of the entropy by means of the energy of the drift arising from the Girsanov
transform of a well-chosen probability. Let $\mathbb{Q}$ be a probability
measure on
$\mathcal{C}([0,T],\mathbb{R}^d)$ such that $\mathbb{Q}\ll \mathbb{P}_X$,
we assume that
$H(\mathbb{Q}/\mathbb{P}_X)<\infty$ and we consider
$$\tilde{\mathbb{Q}}:=\dfrac{d\mathbb{Q}}{d\mathbb{P}_X} (X)\mathbb{P}$$
Clearly $\tilde{\mathbb{Q}}$ is a probability measure on $(\Omega,\mathcal{F})$ and 
\begin{eqnarray*}
\textbf{H}(\tilde{\mathbb{Q}}/\mathbb{P})&=&\int_{\Omega}\ln\big(\frac{d\tilde{\mathbb{Q}}}{d\mathbb{P}}\big)\ d\tilde{\mathbb{Q}}\\
&=&\int_{\Omega}\ln\big(\frac{d\mathbb{Q}}{d\mathbb{P}_X}(X)\big)\ \frac{d\mathbb{Q}}{d\mathbb{P}_X}(X)\ d\mathbb{P}\\
&=&\int_{\mathcal{C}\big([0,T],\mathbb{R}^d\big)}
\ln\Bigg(\frac{d\mathbb{Q}}{d\mathbb{P}_X}\Bigg)\ 
\frac{d\mathbb{Q}}{d\mathbb{P}_X}\ d\mathbb{P}_X\\
&=&\textbf{H}(\mathbb{Q}/\mathbb{P}_X)
\end{eqnarray*} 
The principal key of the proof is the following result
(see  \cite{djellout}):
{\sl
There exists a predictable process $\rho=(\rho^1(t),...,
\rho^d(t))_{0\le t\le T} $, such that 
$$\textbf{H}(\tilde{\mathbb{Q}}/\mathbb{P})=\textbf{H}(\mathbb{Q}/\mathbb{P}_X)=\frac{1}{2}\mathbb{E}_{\tilde{\mathbb{Q}}}\int_0^T|\rho(t)|^2 dt.$$
}
By Girsanov's theorem the process defined by:
$$\tilde{B}(t):=B(t)-\int_0^t \rho(s)ds$$
is a Brownian motion under $\tilde{\mathbb{Q}}$, consequently $X$ verifies
\begin{equation}
\left\{
\begin{aligned}
dX(t)&=b(X(t))dt+\sigma(X(t))d\tilde{B}(t)+\sigma(X(t))\rho(t)dt-d\eta_{X}(t),\\
X(0)&=x.\\
\end{aligned}
\right.
\end{equation}
We consider the solution $Y$ of the following equation
\begin{equation}
\left\{
\begin{aligned}
dY(t)&=b(Y(t))dt+\sigma(Y(t))d\tilde{B}(t)-d\eta_Y(t),\\
Y(0)&=x.\\
\end{aligned}
\right.
\end{equation}
Under $\tilde{\mathbb{Q}}$ and by the uniqueness argument,
the law of the process $(Y(t))_{t\in [0,T]}$ is exactly $\mathbb{P}_X$. Then, under $\tilde{\mathbb{Q}}$,
$(X,Y)$ is a coupling of $(\mathbb{Q},\mathbb{P}_X)$, then it follows that
$$\Bigg[W^{d_2}_2\Big(\mathbb{Q},\mathbb{P}_X\Big)\Bigg]^2\le \mathbb{E}_{\tilde{\mathbb{Q}}}\Big(d_2(X,Y)^2\Big)=\mathbb{E}_{\tilde{\mathbb{Q}}}\Bigg(\int_0^T|X(t)-Y(t)|^2\ dt\Bigg).$$
Now, we estimate the distance on $\mathcal{C}\big([0,T],\mathbb{R}^d\big)$ between $X$ and $Y$ with respect to the distance $d_2$.\\
With the notations:
\begin{multicols}{2}\noindent
	\begin{equation*}
	\hat{X}(t):=X(t)-Y(t)
	\end{equation*}
	\begin{equation*}
	\hat{b}(t):=b(X(t))-b(Y(t))
	\end{equation*}
	\begin{equation*}
	\hat{\sigma}(t):=\sigma(X(t))-\sigma(Y(t))
	\end{equation*}
	\begin{equation*}
	\hat{\eta}(t):=\eta_X(t)-\eta_Y(t) ,
	\end{equation*}	
\end{multicols}
	the process $\hat{X}$ satisfies the following It\^o equation
\begin{equation}
d\hat{X}(t)=\hat{b}(t)dt+\hat{\sigma}(t)d\tilde{B}(t)+
\sigma(X(t))\rho(t)dt-d\hat{\eta}(t)\,.
\end{equation}
By It\^o formula, we have 
\begin{equation}\label{equation by Ito}
d|\hat{X}(t)|^2=\Big[2<\hat{X}(t),\hat{b}(t)+\sigma(X(t))\rho(t)>+
tr(\hat{\sigma}(t)\hat{\sigma}(t)^t)\Big]dt+2<\hat{X}(t),\hat{\sigma}(t)d\tilde{B}(t)>-
2<\hat{X}(t),d\hat{\eta}(t)>\,.
\end{equation}
Using assumption $H(3)$ and the condition $(\ref{Inequality ref})$, we get 
\begin{equation*}
|\hat{X}(t)|^2\ \le\ -2\delta\int_0^t|\hat{X}(s)|^2\ ds+ 2 \int_0^t<\hat{X}(s),\sigma(X(s))\rho(s)>ds+2\int_{0}^{t}<\hat{X}(s),\hat{\sigma}(s)
d\tilde{B}(s)>\,.
\end{equation*}
By using a localization argument and the Cauchy-Schwartz inequality, we obtain for
each $\lambda >0$
\begin{equation*}
\mathbb{E}_{\tilde{\mathbb{Q}}}|\hat{X}(t)|^2\le (\lambda-2\delta)
\int_0^t\mathbb{E}_{\tilde{\mathbb{Q}}}|\hat{X}(s)|^2\ ds+
\frac{\|\sigma\|_\infty^2}{\lambda}\mathbb{E}_{\tilde{\mathbb{Q}}}
\int_0^t|\rho(s)|^2\ ds\,.
\end{equation*}
Gronwall's lemma entails 
\begin{equation}\label{inequality for TC for Kernel}
\mathbb{E}_{\tilde{\mathbb{Q}}}|\hat{X}(t)|^2\le \frac{\|\sigma\|_\infty^2}{\lambda}\mathbb{E}_{\tilde{\mathbb{Q}}}\int_0^t e^{(\lambda-2\delta)(t-s)}|\rho(s)|^2\ ds.
\end{equation}
Thus,
\begin{eqnarray*}
\Bigg[W^{d_2}_2\Big(\mathbb{Q},\mathbb{P}_X\Big)\Bigg]^2 &\le& \mathbb{E}_{\tilde{\mathbb{Q}}}\int_0^T|\hat{X}(t)|^2\ dt\\
             &\le& \frac{\|\sigma\|_\infty^2}{\lambda}\mathbb{E}_{\tilde{\mathbb{Q}}}\int_0^T\int_0^te^{(\lambda-2\delta)(t-s)}|\rho(s)|^2\ ds\ dt     \\
             &\le& \frac{\|\sigma\|_\infty^2}{\lambda} \frac{1-e^{(\lambda-2\delta)T}}{2\delta-\lambda} \mathbb{E}_{\tilde{\mathbb{Q}}}\int_0^T|\rho(s)|^2\ ds                                               
\end{eqnarray*}
Choosing $\lambda=\delta$, we get $\mathbb{P}_X$ verifies
$T_2(\frac{\|\sigma\|_\infty^2}{\delta^2})$. 
\end{proof}

\begin{remark}
It is not surprising to obtain the same constant $C$ as in the case
of non reflected diffusions (see Theorem 5.6 of \cite{djellout})), It
is due to the fact that the reflection term $\eta_X $ satisfies
$$ <X-Y, \eta_X-\eta_Y>\geq 0\,,\,\,\,\, a.s.\,; $$
for two solutions $X, Y$ of the RSDE.
\end{remark}
\begin{remark} {}
	We have notable consequences of $T_2(C)$ such as:
	\begin{enumerate}
\item For any smooth cylindrical function $F$ on $\mathcal{C}([0,T],
\mathbb{R}^d)\subset G:=L^2([0,T],\mathbb{R}^d\,; dt)$, that is 
$$F\in S=\{f(<\gamma,h_1>,...,<\gamma,h_n>); n\ge 1, h\in H,f\in C^\infty_b(\mathbb{R}^n)\}$$
where $H$ is Cameron-Martin space and $<\gamma_1,\gamma_2>=\int^T_0
\gamma_1(t)\gamma_2(t)\ dt$, we have 
\begin{equation}
Var_{\mathbb{P}_X}(F)\,\le\,\frac{\|\sigma\|_\infty^2}{\delta^2}
\int_{\mathcal{C}([0,T],\mathbb{R}^d)} \|\nabla F(\gamma)\|^2_G\,
d\mathbb{P}_X,
\end{equation}
where $Var_{\mathbb{P}_X}(F)$ is the variance of $F$ under the law
$\mathbb{P}_X$, and $\nabla F(\gamma)\in G $ is the gradient of $F$ at
$\gamma$.
\item  Let $K$ be a nonempty subset in $G$ such that  $Z(\gamma)=\underset{h\in K}{\sup}<\gamma,h>\, \in \, L^1(\mathbb{P}_X)$, then we have the following Tsirlson's type concentration inequality
\begin{equation}
\int \exp(\frac{\delta^2}{\|\sigma\|_\infty^2}\, \underset{h\in K}{\sup}\big[<\gamma,h>-\frac{\|h\|^2_G}{2}\big])\ d\mathbb{P}_X \le \, \exp\big(\frac{\delta^2}{\|\sigma\|_\infty^2}\mathbb{E}_{\mathbb{P}_X}Z\big)
\end{equation}
\end{enumerate}
\end{remark}

Let $(P_{t}(x, .))_{t\geq0}$ be
the transition probability kernels of the solution $X^{x}(t)$ with
initial point
$X^{x}(0)=x\in \bar{\mathcal{O}} $.
In the following we derive an
estimation of the Wasserstein-distance between the invariant measure of $X^x $ and its associated transition
distributions. More precisely, we have
\begin{theorem}
Under the same assumptions of Theorem $\ref{T2 w.r.t L2}$,
the following holds true: $ P_t $ admits a unique invariant probability measure
$\mu$, and 
\begin{equation}\label{decr expon pour mesure invaraint}
W_2(P_t(x,.),\mu)\le e^{-\delta t}\big(\int |x-y|^2\ d\mu(y)\big)^\frac{1}{2}\,,
\quad \forall x\in \bar{\mathcal{O}},\ t>0.
\end{equation}
\end{theorem}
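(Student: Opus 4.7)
The strategy is to establish an exponential contraction in the $L^2$ Wasserstein distance for the semigroup via a synchronous coupling, and then use this contraction both to produce the invariant measure and to obtain the stated estimate.

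First I would consider two solutions $X^x$ and $X^y$ of the RSDE $(\ref{RSDE})$ starting from $x,y\in\bar{\mathcal{O}}$, driven by the \emph{same} Brownian motion $B$, with reflection processes $\eta^x, \eta^y$. Setting $\hat X=X^x-X^y$, It\^o's formula yields
\begin{equation*}
d|\hat X(t)|^2=\bigl[2\langle \hat X(t),b(X^x(t))-b(X^y(t))\rangle+\|\sigma(X^x(t))-\sigma(X^y(t))\|_{HS}^2\bigr]dt+dM_t-2\langle \hat X(t),d\eta^x(t)-d\eta^y(t)\rangle,
\end{equation*}
where $M_t$ is a local martingale. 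By $H(3)$ the drift term is bounded by $-2\delta|\hat X(t)|^2\,dt$, and the reflection contribution is nonnegative by $(\ref{Inequality ref})$ applied symmetrically to both solutions, so after a standard localization
\begin{equation*}
\mathbb{E}|\hat X(t)|^2\le |x-y|^2-2\delta\int_0^t\mathbb{E}|\hat X(s)|^2\,ds,
\end{equation*}
and Gronwall gives $\mathbb{E}|X^x(t)-X^y(t)|^2\le e^{-2\delta t}|x-y|^2$. Since this coupling is admissible, we deduce the key contraction
\begin{equation*}
W_2\bigl(P_t(x,\cdot),P_t(y,\cdot)\bigr)\le e^{-\delta t}|x-y|,\qquad \forall\,x,y\in\bar{\mathcal{O}},\ t>0.
\end{equation*}

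Next I would use this contraction to produce the invariant measure. On the complete metric space $(\mathcal{P}_2(\bar{\mathcal{O}}),W_2)$ the map $\nu\mapsto \nu P_t:=\int P_t(y,\cdot)\,d\nu(y)$ is a strict contraction with ratio $e^{-\delta t}<1$; Banach's fixed point theorem provides a unique invariant probability measure $\mu$ for each $P_t$, and consistency of the semigroup forces a common invariant $\mu$. (Finiteness of the second moment of $\mu$ follows from the contraction together with a moment estimate on $P_t(x,\cdot)$, which is standard under $H(2)$ and $H(3)$.)

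Finally, the estimate $(\ref{decr expon pour mesure invaraint})$ is obtained by using the invariance $\mu=\mu P_t$ and the convexity of $W_2^2$ with respect to mixtures: picking the coupling induced by the synchronous coupling constructed above yields
\begin{equation*}
W_2^2\bigl(P_t(x,\cdot),\mu\bigr)=W_2^2\Bigl(P_t(x,\cdot),\int P_t(y,\cdot)\,d\mu(y)\Bigr)\le \int W_2^2\bigl(P_t(x,\cdot),P_t(y,\cdot)\bigr)\,d\mu(y)\le e^{-2\delta t}\int |x-y|^2\,d\mu(y),
\end{equation*}
which is the desired inequality. The only delicate step is the treatment of the reflection in the coupling argument, but this is precisely handled by the monotonicity relation $\langle X^x-X^y,d\eta^x-d\eta^y\rangle\ge 0$ already exploited in the proof of Theorem \ref{T2 w.r.t L2}; everything else is a routine Gronwall/fixed point package.
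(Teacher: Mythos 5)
Your proposal is correct and follows essentially the same route as the paper: a synchronous coupling, It\^o's formula with $H(3)$ and the reflection monotonicity $\langle X^x-X^y,d\eta^x-d\eta^y\rangle\ge 0$, and Gronwall to obtain $\mathbb{E}|X^x(t)-X^y(t)|^2\le e^{-2\delta t}|x-y|^2$, hence the $W_2$-contraction. The paper then simply invokes a ``classic coupling argument'' (citing Wu) for existence, uniqueness and the final estimate, which is exactly the fixed-point and mixture-convexity package you spell out explicitly (and, since $\bar{\mathcal{O}}$ is bounded, the second-moment concern you mention is automatic).
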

\begin{proof}
	Let $X^{x}(t)$ and $X^{y}(t)$ be the solutions of equation 
	$(\ref{RSDE})$ with
	initial point $x,\ y\ \in \bar{\mathcal{O}}$ respectively. We use
	It\^o formula, assumption $H(3)$,
	condition $(\ref{Inequality ref})$ and Gronwall's lemma to obtain   
	\begin{equation*}
\mathbb{E}|X^{x}(t)-X^{y}(t)|^2\ \le\ |x-y|^2e^{-2\delta t},\quad
\forall t\ge 0\,,
	\end{equation*}
	which gives rise,
	by a classic coupling argument $($see for example \cite{Liming Wu}$)$,
	to the existence of a unique invariant probability measure of $(P_t)$
	on $\mathbb{R}^d$ satisfying
	$(\ref{decr expon pour mesure invaraint})$.\\	
\end{proof}
\begin{remark}
\begin{enumerate}
\item[]{}
 \item[1--] Similar arguments as those used in \cite{djellout} for the
 case of
 diffusions without reflections are valid to derive that
 $$P_T(x,.)\in T_2(\frac{\|\sigma\|^2_\infty}{2\delta})\,.$$
Since $ P_T(x,.)\to \mu$ as $T\to \infty$, we get by Lemma
$\ref{lemma stability under week cv}$,
that the invariant measure $\mu $ satisfies also
$ T_2(\frac{\|\sigma\|^2_\infty}{2\delta})$.
\item[2--] The following Poincar\'e inequality holds $($see Theorem 5.6,
\cite{BobGenLed}$)$, for any $g\in C^\infty_b(\mathbb{R}^d)$
	\begin{equation*}
	Var_{P_T(x,.)}(g)\ \le\ \frac{\|\sigma\|_\infty^2}{2\delta}\int_{\mathbb{R}^d}\|\nabla g(y)\|^2\ P_T(x,dy).
	\end{equation*}
\end{enumerate}
\end{remark}

Now, we investigate the Talagrand's inequality $T_{2}(C)$ for
the law $\P_{X}$ of the solution of the equation (1) with respect to the
uniform norm $d_{\infty}$.
\begin{theorem}\label{T2 w.r.t uniform metric}
	In addition of assumptions $H(1)$, $H(2)$, $H(3)$ and
	$\|\sigma\|_{\infty}<\infty$, suppose that 
	$\sigma$ is globally Lipschitzian with Lipschitz constant
	$\|\sigma\|_{Lip}$.
	Then for any $T>0$ there exists some constant
	$C=C(T, \|\sigma\|_{Lip}, \|\sigma\|_{\infty})>0$
	such that, for any initial point $X(0)=x\in\, \bar{\mathcal{O}}$,
	the law $\P_{X}$
	satisfies $T_2(C)$ on  $\mathcal{C}\big([0,T],\mathbb{R}^d\big)$,
	with respect to the uniform metric $d_\infty$. 
\end{theorem}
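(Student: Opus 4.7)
The plan is to follow the Girsanov/coupling scheme already used in the proof of Theorem \ref{T2 w.r.t L2}, but to replace the pointwise estimate on $\mathbb{E}_{\tilde{\mathbb{Q}}}|\hat{X}(t)|^2$ by an estimate on $\mathbb{E}_{\tilde{\mathbb{Q}}}\sup_{t\in[0,T]}|\hat{X}(t)|^2$ using the Burkholder--Davis--Gundy inequality; this is precisely the step where the extra global Lipschitz hypothesis on $\sigma$ is needed.

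More concretely, let $\mathbb{Q}\ll \mathbb{P}_X$ with $H(\mathbb{Q}/\mathbb{P}_X)<\infty$, and, exactly as in the proof of Theorem \ref{T2 w.r.t L2}, introduce $\tilde{\mathbb{Q}}=\frac{d\mathbb{Q}}{d\mathbb{P}_X}(X)\,\mathbb{P}$, the predictable process $\rho$ satisfying $H(\mathbb{Q}/\mathbb{P}_X)=\tfrac12\mathbb{E}_{\tilde{\mathbb{Q}}}\int_0^T|\rho|^2\,dt$, the Girsanov Brownian motion $\tilde B$ under $\tilde{\mathbb{Q}}$, and the auxiliary process $Y$ solving the RSDE driven by $\tilde B$ with $Y(0)=x$. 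Then $(X,Y)$ is a $\tilde{\mathbb{Q}}$-coupling of $(\mathbb{Q},\mathbb{P}_X)$, so
$$\bigl[W_2^{d_\infty}(\mathbb{Q},\mathbb{P}_X)\bigr]^2 \le \mathbb{E}_{\tilde{\mathbb{Q}}}\bigl[\sup_{t\in[0,T]}|\hat X(t)|^2\bigr].$$
From Itô's formula and the Skorohod inequality $\int_0^t\langle\hat X,d\hat\eta\rangle\ge 0$, together with $H(3)$ and Cauchy--Schwarz (with a parameter $\lambda>0$ to be optimized) applied to the $\langle\hat X,\sigma(X)\rho\rangle$ term, one gets
$$|\hat X(t)|^2 \le (\lambda-2\delta)\int_0^t |\hat X(s)|^2 ds + \tfrac{\|\sigma\|_\infty^2}{\lambda}\int_0^t|\rho(s)|^2ds + 2M_t,$$
where $M_t=\int_0^t\langle\hat X(s),\hat\sigma(s)\,d\tilde B(s)\rangle$.

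The novelty is handling $\sup_{s\le t}M_s$. By BDG and the global Lipschitz property of $\sigma$, $\|\hat\sigma(s)\|_{HS}\le \|\sigma\|_{Lip}|\hat X(s)|$, so
$$\mathbb{E}_{\tilde{\mathbb{Q}}}\sup_{s\le t}M_s \le c\,\|\sigma\|_{Lip}\,\mathbb{E}_{\tilde{\mathbb{Q}}}\left(\sup_{s\le t}|\hat X(s)|\cdot\Bigl(\int_0^t|\hat X(s)|^2ds\Bigr)^{1/2}\right)\le \tfrac14\mathbb{E}_{\tilde{\mathbb{Q}}}\sup_{s\le t}|\hat X(s)|^2 + c'\,\|\sigma\|_{Lip}^2\,\mathbb{E}_{\tilde{\mathbb{Q}}}\int_0^t|\hat X(s)|^2ds,$$
by the inequality $2ab\le \tfrac12 a^2+2b^2$. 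The $\tfrac14\sup$ term is absorbed into the left-hand side, and after a standard localization argument one applies Gronwall's lemma to the function $t\mapsto \mathbb{E}_{\tilde{\mathbb{Q}}}\sup_{s\le t}|\hat X(s)|^2$, yielding
$$\mathbb{E}_{\tilde{\mathbb{Q}}}\sup_{t\in[0,T]}|\hat X(t)|^2 \le C(T,\|\sigma\|_{Lip},\|\sigma\|_\infty)\,\mathbb{E}_{\tilde{\mathbb{Q}}}\int_0^T|\rho(s)|^2ds = 2C(T,\|\sigma\|_{Lip},\|\sigma\|_\infty)\,H(\mathbb{Q}/\mathbb{P}_X).$$
Combined with the coupling bound this gives $T_2(C)$ for $d_\infty$.

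The main obstacle, and the only step that differs in substance from Theorem \ref{T2 w.r.t L2}, is the BDG-plus-Lipschitz control of the martingale term; the need for $\sigma$ to be globally Lipschitz (an assumption absent from Theorem \ref{T2 w.r.t L2}) appears here because BDG produces the integrand $\|\hat\sigma(s)\|_{HS}^2|\hat X(s)|^2$, which must be controlled by $|\hat X(s)|^4$ in order to close Gronwall's loop. Everything else, namely the Girsanov/entropy identification of $\rho$, the coupling inequality for $W_2^{d_\infty}$, the dissipativity estimate via $H(3)$, and the non-negativity of $\int_0^t\langle\hat X,d\hat\eta\rangle$ which makes the reflection harmless, is inherited verbatim from the $d_2$ case.
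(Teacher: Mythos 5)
Your proposal is correct and follows essentially the same route as the paper's own proof: the Girsanov/entropy/coupling setup is carried over verbatim from the $d_2$ case, and the sup-norm estimate is closed exactly as in the paper via BDG, the bound $\|\hat\sigma(s)\|_{HS}\le\|\sigma\|_{Lip}|\hat X(s)|$, absorption of a fraction of $\mathbb{E}_{\tilde{\mathbb{Q}}}\sup_{s\le t}|\hat X(s)|^2$ into the left-hand side, and Gronwall. The only difference is cosmetic (you fix the Young-inequality split at $1/4$, while the paper keeps a free parameter $\alpha<\tfrac{1}{3\|\sigma\|_{Lip}}$), which does not affect the result.
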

\begin{proof}
	Similarly to the proof of Theorem $\ref{T2 w.r.t L2}$,
	we have with the same notations 
	\begin{equation*}
	d\hat{X}(t)=\hat{b}(t)dt+\hat{\sigma}(t)d\tilde{B}(t)+
	\sigma(X(t))\rho(t)dt-d\hat{\eta}(t)\,.
	\end{equation*}
	By It\^o formula, we get 
	$$d|\hat{X}(t)|^2=\Big[2<\hat{X}(t),\hat{b}(t)+
	\sigma(X(t))\rho(t)>+tr(\hat{\sigma}(t)\hat{\sigma}(t)^t)\Big]dt+
	2<\hat{X}(t),\hat{\sigma(t)}d\tilde{B}(t)>-2<\hat{X}(t),
	d\hat{\eta}(t)>\,.$$
	By virtue of $H(3)$, the condition $(\ref{Inequality ref})$ and
	Cauchy-Schwartz inequality, we achieve for each $\lambda >0$
	\begin{equation}\label{main equation}
	\underset{s\le t}{\sup}|\hat{X}(s)|^2\le (\lambda-2\delta)\int_0^t|
	\hat{X}(s)|^2 ds\ +\ \frac{\|\sigma\|_\infty^2}{\lambda}
	\int_0^t|\rho(s)|^2ds\ +\ \underset{s\le t}{\sup}\ 2\
	\Big|\int_0^s<\hat{X}(u),\hat{\sigma}(u)d\tilde{B}(u)>\Big|\,.
	\end{equation}
	Burkholder-Davies-Gundy inequality gives, for any
	$\alpha>0$
	\begin{eqnarray*}
\mathbb{E}_{\tilde{\mathbb{Q}}}\ \underset{s\le t}{\sup}\ 2\ \Big|\int_0^s<\hat{X}(u),\hat{\sigma}(u)d\tilde{B}(u)>\Big|&\le& 6\,\mathbb{E}_{\tilde{\mathbb{Q}}}\Bigg(\int_0^t|\hat{\sigma}^t(s)\hat{X}(s)|^2\ ds\Bigg)^\frac{1}{2}\\
	&\le& 6\, \|\sigma\|_{Lip}\ \mathbb{E}_{\tilde{\mathbb{Q}}}\Bigg(\int_0^t|\hat{X}(s)|^4\ ds\Bigg)^\frac{1}{2}\\
	&\le& 6\, \|\sigma\|_{Lip}\ \mathbb{E}_{\tilde{\mathbb{Q}}}\Bigg(\underset{s\le t}{\sup}|\hat{X}(s)|^2\int_0^t|\hat{X}(s)|^2\ ds\Bigg)^\frac{1}{2}\\
	&\le& 3\, \|\sigma\|_{Lip}\  \mathbb{E}_{\tilde{\mathbb{Q}}}\Bigg(\alpha\
	\underset{s\le t}{\sup}|\hat{X}(s)|^2+\frac{1}{\alpha}
	\int_0^t|\hat{X}(s)|^2\ ds\Bigg)\,.
	\end{eqnarray*}
	Therefore, by $ (\ref{main equation}) $ together with
	the last inequality we obrain  
\begin{eqnarray*}
\mathbb{E}_{\tilde{\mathbb{Q}}}\,\underset{s\le t}{\sup}|\hat{X}(s)|^2   &\le&   3\,\|\sigma\|_{Lip}\,\alpha\, \mathbb{E}_{\tilde{\mathbb{Q}}}\,\underset{s\le t}{\sup}|\hat{X}(s)|^2+(\lambda-2\delta+\frac{3\|\sigma\|_{Lip}}{\alpha})\int_0^t \mathbb{E}_{\tilde{\mathbb{Q}}}\,\underset{u\le s}{\sup}|\hat{X}(u)|^2 du\\
	& & +\frac{\|\sigma\|_\infty^2}{\lambda}\,
	\mathbb{E}_{\tilde{\mathbb{Q}}}\int_0^t|\rho(s)|^2ds . 	
\end{eqnarray*}
Thus, choosing $ 0<\alpha < \frac{1}{3 \|\sigma\|_{Lip}}$, we get
\begin{eqnarray*}
\mathbb{E}_{\tilde{\mathbb{Q}}}\,\underset{s\le t}{\sup}|\hat{X}(s)|^2 
&\le& \dfrac{\lambda-2\delta+\frac{3\|\sigma\|_{Lip}}{\alpha}}
{1-3\alpha\|\sigma\|_{Lip}}\int_0^t \mathbb{E}_{\tilde{\mathbb{Q}}}\,
\underset{u\le s}{\sup}|\hat{X}(u)|^2\, du\,+\,\dfrac{\|\sigma\|_\infty^2}
{\lambda(1-3\alpha\|\sigma\|_{Lip})}\,\mathbb{E}_{\tilde{\mathbb{Q}}}\int_0^t|\rho(s)|^2\,ds\,.
\end{eqnarray*}	 
Let
 $$C_1=\frac{\lambda-2\delta+\frac{3\|\sigma\|_{Lip}}{\alpha}}{1-3\alpha\|\sigma\|_{Lip}}\  and\quad C_2=\dfrac{\|\sigma\|_\infty^2}
 {\lambda(1-3\alpha\|\sigma\|_{Lip})}\,.$$
 Gronwall's lemma implies 
\begin{eqnarray*}
\mathbb{E}_{\tilde{\mathbb{Q}}}\,\underset{t\le T}{\sup}|\hat{X}(t)|^2
&\le & C\, \mathbb{E}_{\tilde{\mathbb{Q}}}\int_0^T|\rho(s)|^2\,ds
\end{eqnarray*}	
Thus, 
$$\Big[W^{d_\infty}_2\big(\mathbb{Q},\mathbb{P}_X\big)\Big]^2\ \le
C\,\mathbb{E}_{\tilde{\mathbb{Q}}}\int_0^T|\rho(s)|^2\,ds\,.$$
Which proves that $\mathbb{P}_X\in T_2(C)$, where $C=C_2e^{C_1T}$.
\end{proof}
\begin{remark}
	The property $T_2(C)$ with respect to the uniform metric
	is stronger than $T_2(C)$ with respect to $L^2-$metric,
	However this gain have two costs, the first one is
	the globally Lipschitzian property of $\sigma$,
	the second one is the loss of sharpness of the constant
	as in Theorem \ref{T2 w.r.t L2}.
	Remark that if we choose
	$\lambda=2\delta $, the optimalilty is obtained with the constant
	$C=\frac{\vert\vert\sigma\vert\vert_{\infty}}{\delta}
	\, e^{36 \vert\vert\sigma\vert\vert_{Lip}^2\, T} $.
	
\end{remark}
\begin{remark}

 The following result was established in \cite{BobGenLed} on $\mathbb{R}^d$ and extended after to $\mathcal{C}([0,T],\mathbb{R}^d)$ (see \cite{villani 2003}):  
Let $F$ be a lower bounded measurable function on $\mathcal{C}([0,T],\mathbb{R}^d)$, and consider
\begin{equation*}
Q_c F(\gamma):=\underset{h\in\,\mathcal{C}([0,T],\mathbb{R}^d) }{\inf}\Big(F(\gamma+h)+\frac{1}{2c}\|h\|_{\infty}^2\Big)
\end{equation*}	
the inf-convolution on $\mathcal{C}([0,T],\mathbb{R}^d)$ with respect to metric $d_\infty$. Then $T_2(C)$ in Theorem $\ref{T2 w.r.t uniform metric}$ implies
\begin{equation*}
\mathbb{E}_{\mathbb{P}_X}\exp\Big(Q_C F\Big)\, \le \, \exp\Big(\mathbb{E}_{\mathbb{P}_X}F\Big)\,.
\end{equation*}
If in addition $F$ is Lipschitzian, since $Q_C F\ge F-\frac{C}{2}\|F\|^2_{Lip}$, we have the following concentration inequality
\begin{equation}
\mathbb{E}_{\mathbb{P}_X}\exp\big(F-\mathbb{E}_{\mathbb{P}_X}F\big)\, \le \, \exp\big(\frac{C}{2}\|F\|^2_{Lip}\big)
\end{equation}
which is similar to the one obtained by the property $T_1(C)$, but here the constant $C$ is explicit.
By Chebyshev's inequality and an optimization argument we obtain
\begin{equation*}
\mathbb{P}_X\Big(F-\mathbb{E}_{\mathbb{P}_X} F> r\Big)\,\le \, \exp\Big(-\frac{r^2}{2C\|F\|_{Lip}^2}\Big),\quad \forall\, r\, >0,
\end{equation*}
which is also valid, by an approximation argument, for unbounded Lipschitzian function $F$.
\end{remark}

In this part we establish a Harnack inequality for the
semigroup of the reflected RSDE. We shall use the same technics as those
used in \cite{wang}, which consist in constructing a coupling under a new
probability measure by Girsanov transformation.  
We need to suppose the following  additional assumptions:
$$H(4):\quad \sigma(x)^t\sigma(x)\ge \lambda I,\quad \forall x\in \mathbb{R}^d,$$
$$H(5):\quad |<\sigma(x)-\sigma(y),x-y>|\le k|x-y|,\quad \forall x,y\in
\mathbb{R}^d\,,$$
where $\lambda, k>0 $ are two real constants.
\begin{theorem}\label{Harnack1}
	\begin{enumerate}
		\item If $H(3)$ and $H(4)$ are satisfied, then log-Harnack inequality
		holds, for all $f\ge 1$, $x,y\in \mathcal{O}$
		\begin{equation*}
		P_T\log f(y)\le \log P_Tf(x) +\frac{-\delta|x-y|}{\lambda^2(1-e^{2\delta T})}.
		\end{equation*}
	\item If $H(3)$, $H(4)$ and $H(5)$ hold, then for $p>(1+\frac{k}{\lambda})^2$ and $c_p=\max\{k,\frac{\lambda}{2}(\sqrt{p}-1)\}$, then the Harnack inequality
	\begin{equation*}
	(P_T f(y))^p\le (P_T f^p(x))\exp\left[\frac{-\delta\sqrt{p}(\sqrt{p}-1)|x-y|}{2c_p((\sqrt{p}-1)\lambda-c_p)(1-e^{2\delta T})} \right] 
	\end{equation*}	
	holds for all $T>0$,$x,y\in \mathcal{O}$ and $f$ bounded positive function.
	\end{enumerate}
\end{theorem}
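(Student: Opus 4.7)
The plan is to implement Wang's coupling-by-change-of-measure method, adapted to the reflected setting. Fix $x,y\in\mathcal{O}$ and $T>0$, let $X=X^x$ solve \eqref{RSDE}, and build an auxiliary process $Y$ starting at $y$ satisfying
\[
dY(t)=b(Y(t))\,dt+\sigma(Y(t))\,dB(t)+\sigma(Y(t))\xi(t)\,dt-d\eta_Y(t),
\]
for a predictable drift $\xi$ to be chosen. By Girsanov's theorem, under the new measure $d\tilde{\mathbb{Q}}=R_T\,d\mathbb{P}$ with $R_T=\exp\bigl(-\int_0^T\xi(s)\cdot dB(s)-\tfrac12\int_0^T|\xi(s)|^2\,ds\bigr)$, the process $\tilde B(t)=B(t)+\int_0^t\xi(s)\,ds$ is a Brownian motion, so that $Y$ has $\tilde{\mathbb{Q}}$-law equal to $P_T(y,\cdot)$.

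Using the non-degeneracy $H(4)$ (which makes $\sigma(z)$ invertible with $|\sigma(z)^{-1}v|^2\le\lambda^{-1}|v|^2$), I would choose $\xi(t)=\varphi(t)\,\sigma(Y(t))^{-1}(X(t)-Y(t))$ for a deterministic profile $\varphi\ge 0$. Itô's formula for $|X-Y|^2$, together with $H(3)$ and the reflection monotonicity $\langle X-Y,d\eta_X-d\eta_Y\rangle\ge 0$, yields
\[
d|X(t)-Y(t)|^2\le -2\bigl(\delta+\varphi(t)\bigr)|X(t)-Y(t)|^2\,dt+dM_t,
\]
for a local martingale $M$. Selecting $\varphi$ so that $\int_0^T(\delta+\varphi(s))\,ds=+\infty$ forces $X(T)=Y(T)$ almost surely under $\tilde{\mathbb{Q}}$, and the precise profile of $\varphi$ is exactly what produces the exponential factor $(1-e^{2\delta T})^{-1}$ displayed in the theorem.

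Given coincidence $X(T)=Y(T)$, the log-Harnack inequality follows from the Fenchel--Young variational formula for relative entropy: for $f\ge 1$,
\[
P_T\log f(y)=\mathbb{E}_{\tilde{\mathbb{Q}}}\log f(X(T))\le \log P_Tf(x)+H(\tilde{\mathbb{Q}}\,|\,\mathbb{P}),
\]
and $H(\tilde{\mathbb{Q}}\,|\,\mathbb{P})=\tfrac12\mathbb{E}_{\tilde{\mathbb{Q}}}\int_0^T|\xi(s)|^2\,ds$ is controlled by substituting the explicit profile of $|X-Y|^2$. For the full Harnack inequality, apply Hölder:
\[
\bigl(P_Tf(y)\bigr)^p=\bigl(\mathbb{E}[R_Tf(X(T))]\bigr)^p\le P_Tf^p(x)\cdot\bigl(\mathbb{E}\,R_T^{p/(p-1)}\bigr)^{p-1}.
\]

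The main obstacle is the exponential moment $\mathbb{E}\,R_T^{p/(p-1)}$. Rewriting $R_T^{p/(p-1)}$ as a genuine exponential martingale times an explicit correction, one must absorb the cross term coming from $\sigma(X)-\sigma(Y)$ paired against $\xi$ into the quadratic part. This is exactly where $H(5)$ intervenes: it supplies a linear bound on $|\langle\sigma(X)-\sigma(Y),X-Y\rangle|$ that, combined with $H(4)$, dominates the cross term by a constant multiple of $|X-Y|^2$ with coefficient governed by $c_p=\max\{k,\tfrac{\lambda}{2}(\sqrt{p}-1)\}$. The threshold $p>(1+k/\lambda)^2$ then guarantees the positive gap $(\sqrt{p}-1)\lambda-c_p>0$, which is what makes the resulting Gronwall-type estimate integrable on $[0,T]$ and yields the explicit Harnack constant of part (2). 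The reflection terms do not spoil the argument, since they always enter with the favourable sign via the monotonicity $\langle X-Y,d\eta_X-d\eta_Y\rangle\ge 0$.
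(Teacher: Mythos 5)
Your overall strategy is the same as the paper's: Wang's coupling by change of measure, with the reflection terms rendered harmless by the monotonicity $\langle X-Y,d\eta_X-d\eta_Y\rangle\ge 0$, Young's inequality for the log-Harnack part, and H\"older plus an exponential moment of $R_T$ for the Harnack part (the paper itself only sketches this and defers the two quantitative estimates to Lemmas 2.1 and 2.2 of Wang (2011)). However, your specific choice of Girsanov kernel creates a genuine gap. With $\xi(t)=\varphi(t)\sigma(Y(t))^{-1}(X(t)-Y(t))$, i.e.\ extra drift $\sigma(Y)\xi=\varphi(t)(X-Y)$ in the $Y$-equation, the clean contraction $d|X-Y|^2\le -2(\delta+\varphi(t))|X-Y|^2dt+dM_t$ that you derive holds under $\mathbb{P}$, where the extra drift of $X-Y$ is exactly $-\varphi(t)(X-Y)$. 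But the quantity you must estimate, $H(\tilde{\mathbb{Q}}\,|\,\mathbb{P})=\tfrac12\mathbb{E}_{\tilde{\mathbb{Q}}}\int_0^T|\xi(s)|^2ds$, is an expectation under $\tilde{\mathbb{Q}}$, and under $\tilde{\mathbb{Q}}$ the extra drift of $X-Y$ is $-\varphi(t)\,\sigma(X)\sigma(Y)^{-1}(X-Y)$. Its inner product with $X-Y$ equals $|X-Y|^2$ only up to the correction $\langle X-Y,(\sigma(X)-\sigma(Y))\sigma(Y)^{-1}(X-Y)\rangle$, which is multiplied by the singular factor $\varphi(t)$ and can only be tamed by a Lipschitz-type bound on $\sigma$; no such bound is assumed in part 1 (H(5) is absent there), and even in part 2 it would alter the profile of $\varphi$ and the constants. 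So ``substituting the explicit profile of $|X-Y|^2$'' does not go through as written. The paper, following Wang, avoids this by adding $\frac{1}{\xi_t}\sigma(Y)\sigma(X)^{-1}(X-Y)$ to the $Y$-equation, so that under the new measure the $X$-equation acquires exactly the drift $-(X(t)-Y(t))/\xi_t$; then both the coupling-before-$T$ argument and the entropy bound are carried out under $\mathbb{Q}$ and require only $H(4)$.

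A second, smaller point: you do not justify that $R$ is a uniformly integrable martingale, which is not automatic here because the coupling requires $\int_0^T\varphi(s)\,ds=\infty$, so $\xi$ is singular near $T$; similarly, the finiteness and the precise bound of $\mathbb{E}R_T^{p/(p-1)}$ is the whole content of part 2 and is more than ``absorbing a cross term'' --- these two points are precisely Wang's Lemmas 2.1 and 2.2, which the paper invokes. With the drift chosen as in the paper, the rest of your outline (reflection monotonicity, Young, H\"older, and the threshold $p>(1+k/\lambda)^2$ guaranteeing $(\sqrt{p}-1)\lambda-c_p>0$) matches the paper's argument.
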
 
\begin{proof} Exploiting in many places the fact that for two solutions $X, Y $ we have
$$ <X-Y, \eta_X-\eta_Y>\geq 0\,,\,\,\,\, a.s.\,;$$
the proof follows exactly the same lines as for the
non-reflected diffusions case.
So, we only give some ideas of the proof and refer to \cite{wang} for more
details.\\
Let $x,y\in \mathcal{O}$, $T>0$ and
$p>(1+\frac{k}{\lambda})^2$ be fixed such that $x\neq y$.
We set
	\begin{equation}\label{Theta_T}
	\theta_T:=\frac{2k}{(\sqrt{p}-1)\lambda}\in (0,2)
	\end{equation}
For $\theta\in (0,2)$, we consider
$$\xi_t=\frac{2-\theta}{-2\delta}(1-e^{-2\delta(t-T)}),\, t\in [0,T].$$
Then $\xi$ is smooth and strictly positive on $[0,T)$ such that
\begin{equation}
2+2\delta\xi_t+\xi_t'=\theta,\ t\in [0,T].
\end{equation}
 We consider the coupling
\begin{eqnarray*}
dX(t)&=& b(X(t))dt+\sigma(X(t))dB(t)-d\eta_X(t),\quad X(0)=x\\
dY(t)&=& b(Y(t))dt+\sigma(Y(t))dB(t)-d\eta_Y(t)+\frac{1}{\xi_t}\sigma(Y(t))\sigma(X(t))^{-1}(X(t)-Y(t))dt,\quad Y(0)=y .
\end{eqnarray*}
 $(X(t),Y(t))$ is a well defined continuous process for $t<T\wedge \zeta$, where $\zeta=\lim_{n}\xi_n$ for
$$\zeta_n:=\inf\{t\in [0,T):|Y(t)|\ge n\},$$
with convention $\inf\emptyset=T$. Let
$$R_{t}:=\exp\left[ -\int_0^{t\wedge \zeta}\frac{1}{\xi_s}<\sigma(X(s))^{-1}(X(s)-Y(s)),dB(s)>-\frac{1}{2}\int_0^{t\wedge\zeta}\frac{1}{\xi_s^2}|\sigma(X(s))^{-1}(X(t)-Y(t))|^2ds\right]$$
 $t\in [0,T)$.\\

$(R_{t})_{t\in[0,T]}$ is a uniformly integrable martingale (see \cite{wang}), and then the process
 $$d\tilde{B}(t)=dB(t)+\frac{1}{\xi_t}\sigma(X(t))^{-1}(X(t)-Y(t))dt$$
 is a Brownian motion under the new probability $\mathbb{Q}=R_T\,\mathbb{P} $. Consequently the processes $X$ and $Y$
 satisfy under $\mathbb{Q}$
 \begin{eqnarray*}
 dX(t)&=& b(X(t))dt+\sigma(X(t))d\tilde{B}(t)-d\eta_X(t)-\frac{X(t)-Y(t)}{\xi_t}dt,\quad X(0)=x\\
 dY(t)&=& b(Y(t))dt+\sigma(Y(t))d\tilde{B}(t)-d\eta_Y(t),\quad Y(0)=y .
 \end{eqnarray*}
 Taking into account that $X(T)=Y(T)$ \, $\mathbb{Q}-a.s.$,
 Young inequality gives rise for any $f\ge 1$
\begin{eqnarray*}
 P_T\log f(y)&=&\mathbb{E}_{\mathbb{Q}}\log f(Y_T)=\mathbb{E}\left[R_{T}
 \log {f(X_T)} \right]\\
 &\le& \mathbb{E}R_{T}\log R_{T}+\log\mathbb{E}f(X(T))\,.
\end{eqnarray*} 
Now, using the following estimation given by Lemma 2.1 in \cite{wang}
 $$\mathbb{E}R_{T}\log R_{T}\leq \frac{-\delta|x-y|^2}{\lambda^2\theta(2-\theta)(1-e^{2\delta T})} $$
 and taking $\theta=1$, we complete the proof of the first inequality.\\
 On other hand, let $\theta=\theta_T$, since $X(T)=Y(T)\,\, \mathbb{Q}-a.s$,
 we have
 \begin{eqnarray*}
 \left( P_Tf(y)\right)^p&=&\left( \mathbb{E}_{\mathbb{Q}}f(Y(T))\right)^p=\left( \mathbb{E}R_{T}
 f(X(T))\right)^p \\
 &\le& P_Tf^p(x)\left(\mathbb{E}R^{p/p-1}_{T} \right)^{p-1}. 
 \end{eqnarray*} 
By equality in (\ref{Theta_T}) we see that
 $\frac{p}{p-1}=1+\frac{\lambda^2\theta_T^2}{4c_p(c_p+\theta_T\lambda)} =1+r_T$.
Lemma 2.2 in \cite{wang}, yields
 \begin{eqnarray*}
 \left(\mathbb{E}R^{p/p-1}_{T} \right)^{p-1}&=&\left(\mathbb{E}R^{1+r_T}_{T} \right)^{p-1}\\
 &\le& \exp\left[ \frac{-(p-1)\theta_T\delta(2k+\theta_T\lambda)|x-y|^2}{4k^2(2-\theta_T)(k+\theta\lambda)(1-e^{2\delta T})}\right]\\
 &=&\exp\left[\frac{-\delta\sqrt{p}(\sqrt{p}-1)|x-y|^2}{2c_p[(\sqrt{p}-1)\lambda-c_p](1-e^{2\delta T})}\right].
 \end{eqnarray*}
This completes the proof of the second inequality.
\end{proof}
\begin{remark}
As it is shown in \cite{wang} for non-reflected diffusions case,
we can apply Theorem \ref{Harnack1} to get some Harnack inequalities and
contractivity properties for transition probabilities of reflected
diffusion semigroups.

\end{remark}
\section{ Stochastic differential equations involving local times}

  As was pointed out in the introduction, in this section we consider
  the following stochastic differential equation (SDEL):
\begin{equation}\label{RSDE with local time}
\left\{
\begin{aligned}
dX(t)&=b(X(t))dt+\sigma(X(t))d{B}(t)+\int_{\mathbb{R}}\nu(da)\,dL_t^a(X)\\
X(0)&=x\,,\\
\end{aligned}
\right.
\end{equation}
where $b, \sigma:\mathbb{R}\longrightarrow \mathbb{R}$ are measurable functions and
$\nu$ is a bounded signed measure on $\mathbb{R}$, such that $|\nu(a)|<1,\,
\forall a\in \mathbb{R}$. The process $L_t^a(X)$ stands for the symmetric local time of
the unknown process $X$ at a point $a$ and $B_t$ is a real Brownian motion defined on a
complete filtered probability space $(\Omega,\mathcal{F},\mathbb{P},(\mathcal{F}_t)_{t\geq0} )$,
where $(\mathcal{F}_t)_{t\geq0}$ is the natural filtration generated by
$B_t$.
SDEs of type $(\ref{RSDE with local time})$ have been studied previously
by many authors. We will refer essentially to the paper of
\cite{LeGall 1983} where the author gives necessary and sufficient conditions for pathwise
uniqueness, and together with results on the existence of weak solutions
he also proves results on strong uniqueness.  

Consider the function $f_\nu$ given by:
\begin{equation*}
f_\nu(x)=\exp\Big(-2\nu^c(]-\infty,x])\Big)\underset{y\le x}{\prod}\Bigg(\frac{1-\nu({y})}{1+\nu({y})}\Bigg)\,,\,\,\,\,\,\,\, x\in\R
\end{equation*}
where $\nu^c$ is the continuous part of $\nu$. The following lemma
appears in \cite{LeGall 1983}.

\begin{lemma}\label{le Gall lemma}
	Let $\nu$ be a bounded signed measure on $\mathbb{R}$. Then we have:
\begin{enumerate}
	\item $f_\nu$ is of bounded variation on $\mathbb{R}$,
	\item $f_\nu$ is right continuous,
	\item  there exist constants $ m,\ M >0 $ such that $ m\leq f_\nu \le M$,
	\item the function $f_\nu$ satisfies $f_\nu'(dx)+\Big(f_\nu(x)+f_\nu(x-)\Big)\nu(dx)=0$, with $f_\nu'(dx)$ denotes the bounded measure associated with $f_\nu$ and $f_\nu(x-)$ denotes the left-limit of $f_\nu $ at a point $x$.
\end{enumerate}
\end{lemma}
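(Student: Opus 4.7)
The plan is to decompose $\nu = \nu^c + \nu^d$ into its continuous and atomic parts, factor $f_\nu = E\cdot \Pi$ where $E(x) := \exp(-2\nu^c((-\infty,x]))$ and $\Pi(x) := \prod_{y\le x} \frac{1-\nu(\{y\})}{1+\nu(\{y\})}$, then establish the four properties for each factor. Set $V := |\nu|(\mathbb{R})<\infty$. For (iii) one has directly $e^{-2V}\le E\le e^{2V}$; for $\Pi$, enumerating the atoms as $(a_n)$ and using $\sum_n|\nu(\{a_n\})|\le V$ together with the standing hypothesis $|\nu(\{a_n\})|<1$, I would obtain absolute convergence of $\sum_n \log\frac{1-\nu(\{a_n\})}{1+\nu(\{a_n\})}$ via the elementary bound $|\log\frac{1-a}{1+a}|\le C|a|$ valid on the tail (only finitely many atoms can lie outside a compact subinterval of $(-1,1)$). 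This yields that $\Pi$ is positive and bounded, so (iii) follows. Right continuity (ii) is then immediate: $E$ is continuous since $\nu^c$ is atomless, while $\Pi(x+\varepsilon)/\Pi(x)\to 1$ as $\varepsilon\downarrow 0$ by dominated convergence on the log-sums. Item (i) drops out as well: the continuous part of $df_\nu$ has total variation $\le 2e^{2V}|\nu^c|(\mathbb{R})$, and the sum of absolute jump sizes of $\Pi$ is bounded by $M\sum_n|\nu(\{a_n\})|<\infty$.

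The main step is the identity (iv), which I would verify by treating the continuous and atomic parts of $\nu$ separately. For the continuous part, the Stieltjes chain rule applied to $\log E$ gives $dE = -2E\,d\nu^c$; since $f_\nu(x-)=f_\nu(x)$ at points of continuity of $\nu$, this reads $-(f_\nu(x)+f_\nu(x-))\,\nu^c(dx)$. At an atom $y$ with $a:=\nu(\{y\})$, writing $f_\nu(y)=f_\nu(y-)\cdot\frac{1-a}{1+a}$ yields
\[
f_\nu(y)-f_\nu(y-) = -\frac{2a}{1+a}\,f_\nu(y-), \qquad f_\nu(y)+f_\nu(y-) = \frac{2}{1+a}\,f_\nu(y-),
\]
from which $(f_\nu(y)-f_\nu(y-)) + (f_\nu(y)+f_\nu(y-))\,\nu(\{y\}) = 0$ is immediate. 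Summing the continuous and atomic contributions produces exactly the identity in (iv).

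The main obstacle will be handling the (possibly infinite) product $\Pi$ rigorously when the atoms of $\nu$ accumulate: well-definedness, right-continuity, and bounded variation of $\Pi$ all rest on converting the absolute summability $\sum_n|\nu(\{a_n\})|\le V$ into absolute summability of the associated log-series, which in turn uses the uniform gap $|\nu(\{a_n\})|<1$ to avoid the singularity of $\log\frac{1-a}{1+a}$ at $a=\pm 1$. Once this convergence is secured, the remaining arguments are standard Stieltjes calculus manipulations.
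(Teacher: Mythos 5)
Your proposal is correct, and there is in fact nothing in the paper to compare it against line by line: the paper states the lemma as quoted from Le Gall (1984) (\cite{LeGall 1983}) and gives no proof, so your write-up is a self-contained reconstruction of the standard argument. The factorization $f_\nu=E\cdot\Pi$, the absolute convergence of $\sum_n\log\frac{1-\nu(\{a_n\})}{1+\nu(\{a_n\})}$ deduced from $\sum_n|\nu(\{a_n\})|\le|\nu|(\mathbb{R})<\infty$ plus the observation that only finitely many atom masses can stay away from $0$, the uniform bound $|\log\Pi(x)|\le\sum_n\bigl|\log\frac{1-\nu(\{a_n\})}{1+\nu(\{a_n\})}\bigr|$ giving item (iii), right continuity via the vanishing tail of the log-series, and the verification of (iv) separately on the continuous part (Stieltjes chain rule $dE=-2E\,d\nu^c$, with $f_\nu(x-)=f_\nu(x)$ holding $\nu^c$-a.e.\ since the discontinuity set is countable and hence $\nu^c$-null) and at each atom (your two displayed identities, which cancel exactly against $\nu(\{y\})$) are all sound. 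Two points deserve tightening: in the bounded-variation step, the jump of $f_\nu$ at an atom of mass $a$ equals $\frac{2|a|}{1+a}\,f_\nu(y-)$, which is not $O(|a|)$ uniformly as $a$ approaches $-1$, so the claimed bound $M\sum_n|\nu(\{a_n\})|$ is not literally right; you should reuse the same finitely-many-large-atoms remark (finitely many atoms with $|a|\ge\tfrac12$, each jump at most $2M$, tail jumps at most $4M|a|$ and summable). Likewise, in (iv) it is worth saying explicitly that the continuous part of the measure $\bigl(f_\nu(x)+f_\nu(x-)\bigr)\nu(dx)$ is $2f_\nu(x)\,\nu^c(dx)$ because $\nu^c$ gives no mass to the countable set of atoms, so the continuous and atomic contributions cancel separately. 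With these cosmetic fixes the proof is complete.
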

We consider the function $
F(x)=\int_0^x\, f_\nu(u)\,du $, $x\in\R$. It is easy to show that $F$ is one to one and that $F$ and 
$F^{-1}$ are Lipschitz functions.
An appeal to It\^o-Tanaka formula provides us the following lemma
(see \cite{LeGall 1983}) 
\begin{lemma}\label{transf}
	A process $X$ is a solution of equation $(\ref{RSDE with local time})$ if and only if \  $Y:=F(X)$ is a solution of:
	\begin{equation}\label{OSDE}
	dY(t)=\bar{b}(Y(t)) \, dt +\bar{\sigma}(Y(t))\, dB(t)\,,\,\, Y(0)=F(x)\,,
	\end{equation}
	where 
	$$\bar{b}(x)=(bf_\nu)\circ F^{-1}(x)\quad  and\quad \bar{\sigma}(x)=(\sigma f_\nu)\circ F^{-1}(x).$$
\end{lemma}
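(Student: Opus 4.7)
The proof proceeds by applying the generalized Itô--Tanaka formula to $F$, which is well-suited here because $F(x)=\int_0^x f_\nu(u)\,du$ has a BV derivative $f_\nu$, so $F$ is the difference of two convex functions, and one can use the symmetric local time of $X$.

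\textbf{Forward direction.} Assume $X$ solves (\ref{RSDE with local time}). Since $F$ is the integral of a BV right-continuous function, the Itô--Tanaka formula with symmetric local time yields
\begin{equation*}
F(X_t)=F(x)+\int_0^t \frac{f_\nu(X_s)+f_\nu(X_s-)}{2}\,dX_s+\frac{1}{2}\int_{\mathbb{R}} L_t^a(X)\,f_\nu'(da).
\end{equation*}
I substitute the dynamics of $X$ into the stochastic integral, splitting $dX_s$ into its drift, martingale, and local-time parts. For the local-time part, using that the measure $dL_s^a(X)$ is carried by $\{s: X_s=a\}$, the integrand can be evaluated at $X_s=a$, giving
\begin{equation*}
\int_0^t \frac{f_\nu(X_s)+f_\nu(X_s-)}{2}\int_{\mathbb{R}}\nu(da)\,dL_s^a(X)
=\int_{\mathbb{R}} \frac{f_\nu(a)+f_\nu(a-)}{2}\,\nu(da)\,L_t^a(X).
\end{equation*}
Next, I invoke Lemma \ref{le Gall lemma}(4), which asserts $f_\nu'(da)=-(f_\nu(a)+f_\nu(a-))\nu(da)$. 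This makes the Itô--Tanaka local time term the exact negative of the contribution just computed, and the two cancel. The key cancellation is the reason for introducing $F$.

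\textbf{Cleaning up the drift and diffusion.} What remains from the stochastic integral is
\begin{equation*}
\int_0^t \frac{f_\nu(X_s)+f_\nu(X_s-)}{2}\bigl(b(X_s)\,ds+\sigma(X_s)\,dB_s\bigr).
\end{equation*}
Since $f_\nu$ is BV and right-continuous it has at most countably many jumps, and by the occupation times formula the set of times $s$ where $X_s$ lies in this countable set has $ds$-measure zero (and contributes zero quadratic variation to the martingale part), so $\frac{f_\nu(X_s)+f_\nu(X_s-)}{2}$ may be replaced by $f_\nu(X_s)$. Writing $X_s=F^{-1}(Y_s)$ then yields
\begin{equation*}
dY_t=(bf_\nu)\!\circ\! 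F^{-1}(Y_t)\,dt+(\sigma f_\nu)\!\circ\! F^{-1}(Y_t)\,dB_t,\qquad Y_0=F(x),
\end{equation*}
which is exactly (\ref{OSDE}).

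\textbf{Converse direction.} Suppose $Y$ solves (\ref{OSDE}) and set $X:=F^{-1}(Y)$. Since $F^{-1}$ is also Lipschitz and the integral of a BV function (its derivative being $1/f_\nu\circ F^{-1}$), I apply Itô--Tanaka to $X=F^{-1}(Y)$ in the same way. The $(F^{-1})'$ factor divides out $f_\nu$, recovering $b(X_s)\,ds+\sigma(X_s)\,dB_s$ from the drift and martingale parts; the local time term produces $\int_{\mathbb{R}}\nu(da)\,dL^a_t(X)$ after translating $L^a_t(Y)$ back via $F^{-1}$ (using $L_t^{F(a)}(Y)=f_\nu(a)L_t^a(X)$, which follows from the same Itô--Tanaka computation applied to $|X-a|$). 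This gives back (\ref{RSDE with local time}).

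\textbf{Main obstacle.} The delicate point is the cancellation of local time contributions: ensuring that the symmetric version of $f_\nu$ is used consistently so that the formula $f_\nu'(da)=-(f_\nu(a)+f_\nu(a-))\nu(da)$ from Lemma \ref{le Gall lemma}(4) matches what the Itô--Tanaka local time integral produces. The subsequent step of replacing $\frac{f_\nu(X_s)+f_\nu(X_s-)}{2}$ by $f_\nu(X_s)$ in the drift/diffusion also needs justification through the countability of the jump set of $f_\nu$ and the occupation formula. Once these identifications are made, both implications reduce to bookkeeping.
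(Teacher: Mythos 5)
Your proof is correct and follows exactly the route the paper indicates: the paper gives no written argument for this lemma, simply attributing it to Le Gall's Itô--Tanaka computation, which is what you reconstruct (symmetric Itô--Tanaka for $F$, cancellation of the local-time contributions via Lemma \ref{le Gall lemma}(4), identification of $\bar b$ and $\bar\sigma$, and the inverse transform for the converse). The only step to phrase more carefully is replacing $\tfrac{1}{2}\bigl(f_\nu(X_s)+f_\nu(X_s-)\bigr)$ by $f_\nu(X_s)$ in the $ds$-integral, since the occupation-times formula controls $d\langle X\rangle_s$ rather than $ds$ and hence by itself settles only the martingale part; this is exactly the technical point treated in \cite{LeGall 1983}, and it is harmless where the paper applies the lemma.
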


In the litterature, existence and uniqueness results of strong solutions for equations of type
(\ref{RSDE with local time}) and (\ref{OSDE}) were obtained under weaker conditions than we will assume here.
But, the proof of 
$ T_{2}(C)-$inequality requires some strong conditions on the coefficients (even in  ordinary SDEs framework) . In the rest of this section we assume that there is a unique strong solution of (\ref{OSDE}) .

To prove our results we need the following
stability property of $T_2(C)$ 
(see Lemma 2.1 in \cite{djellout}):
\begin{lemma}\label{stabil} Let $(E,d_E)$ and $(F,d_F)$ be two metric spaces and
$\psi:(E,d_E)\to(F,d_F)$ is a Lipschitz
application ($\alpha>0$), such that for an $\alpha>0$
	$$d_F(\psi(x),\psi(y))\le \alpha\, d_E(x,y)\quad \forall x,\, y\in\, E\,.$$
If $\mu \in T_p(C)$ on $(E,d_E)$,
then $\tilde{\mu}:=\mu\circ\psi^{-1}\, \in\, T_p(\alpha^2C)$ on $(F,d_F)$, for any $p\geq 1$.
\end{lemma}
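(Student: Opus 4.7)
The plan is to push a test measure on $F$ back to $E$, exploit $T_p(C)$ there, and then push a coupling forward through $\psi$. Concretely, fix a probability measure $\tilde{\nu}$ on $F$; we want to bound $W_p^{d_F}(\tilde{\mu},\tilde{\nu})$ by $\sqrt{2\alpha^2 C\,H(\tilde{\nu}/\tilde{\mu})}$. We may assume $H(\tilde{\nu}/\tilde{\mu})<\infty$ (otherwise the inequality is trivial), so that $\tilde{\nu}\ll\tilde{\mu}$ with density $g:=d\tilde{\nu}/d\tilde{\mu}$.

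First I would lift $\tilde{\nu}$ to $E$ by setting $\nu:=(g\circ\psi)\,\mu$. Two verifications are immediate: (i) $\psi_{*}\nu=\tilde{\nu}$, because for any bounded measurable $h$ on $F$,
$$\int_F h\,d(\psi_{*}\nu)=\int_E (h\circ\psi)(g\circ\psi)\,d\mu=\int_F hg\,d\tilde{\mu}=\int_F h\,d\tilde{\nu};$$
and (ii) $H(\nu/\mu)=H(\tilde{\nu}/\tilde{\mu})$, since $d\nu/d\mu=g\circ\psi$ gives
$$H(\nu/\mu)=\int_E (g\circ\psi)\log(g\circ\psi)\,d\mu=\int_F g\log g\,d\tilde{\mu}=H(\tilde{\nu}/\tilde{\mu}).$$

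Next I would transfer couplings. If $\pi\in\Pi(\mu,\nu)$ is any coupling on $E\times E$, then $\tilde{\pi}:=(\psi\otimes\psi)_{*}\pi$ lies in $\Pi(\tilde{\mu},\tilde{\nu})$ by (i). The Lipschitz assumption $d_F(\psi(x),\psi(y))\le\alpha\,d_E(x,y)$ gives
$$\int_{F\times F} d_F(u,v)^p\,d\tilde{\pi}(u,v)=\int_{E\times E} d_F(\psi(x),\psi(y))^p\,d\pi(x,y)\le\alpha^p\int_{E\times E} d_E(x,y)^p\,d\pi(x,y).$$
Taking the infimum over $\pi\in\Pi(\mu,\nu)$ yields $W_p^{d_F}(\tilde{\mu},\tilde{\nu})\le\alpha\,W_p^{d_E}(\mu,\nu)$.

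Finally I would combine this with the hypothesis $\mu\in T_p(C)$ and the entropy identity from step one:
$$W_p^{d_F}(\tilde{\mu},\tilde{\nu})\le\alpha\,W_p^{d_E}(\mu,\nu)\le\alpha\sqrt{2C\,H(\nu/\mu)}=\sqrt{2\alpha^2 C\,H(\tilde{\nu}/\tilde{\mu})},$$
which is exactly $\tilde{\mu}\in T_p(\alpha^2 C)$. There is no genuine obstacle here; the only subtle point is recognising that the lift $\nu=(g\circ\psi)\mu$ is the right construction, because it simultaneously preserves entropy and pushes forward to $\tilde{\nu}$, so that the entropy on $E$ can be traded directly for the entropy on $F$ without loss.
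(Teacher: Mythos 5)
Your proof is correct. The paper does not prove this lemma itself but quotes it as Lemma 2.1 of Djellout--Guillin--Wu (2004), and your argument -- lifting the density through $\psi$ via $\nu=(g\circ\psi)\,\mu$ so that entropy is preserved, then pushing couplings forward through $\psi\otimes\psi$ to contract the Wasserstein distance by the factor $\alpha$ -- is exactly the standard proof given in that reference, so it matches the intended argument.
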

 The next theorem is the main result of this section.
 We need to suppose that $ \bar{b}\,\,\text{and}\,\, \bar{\sigma}$ are globally Lipschtz functions, that is we make the following assumptions on the coefficients of equation $(\ref{RSDE with local time})$:
$${\bf H(6)}\quad \quad |\bar{b}(x)-\bar{b}(y)|
\vee
	|\bar{\sigma}(x)-\bar{\sigma}(y)|\le k\, |x-y|\,,\,\,
	\forall x,y\in \mathbb{R}\,.$$
We also suppose the following dissipativity assumption:\\
 $\qquad\quad {\bf  H(7)}$\quad \quad  there exists $\delta >0$ such that, for all $x,y\ \in \mathbb{R}$ we have
 $$\Big[(\bar{\sigma}(x)-\bar{\sigma}(y)\Big]^2+2(x-y)\big(\bar{b}(x)-
 \bar{b}(y)\big)\le -2\delta|x-y|^2.$$
\begin{theorem}\label{$T_2$ w.r.t $d_2$} 
	Suppose that $H(6)$ and $H(7)$ hold and
	$\vert\vert\bar{\sigma}\vert\vert_{\infty}<\infty $.
	Let $\mathbb{P}_X$  be the law of $X$, the solution of the
	stochastic differential
	equation $(\ref{RSDE with local time})$ with initial condition
	$X(0)=x\in{\mathbb R} $, then we have
	\begin{enumerate}
	 \item[1--] The probability measure $\mathbb{P}_X$ satisfies
	 $T_{2}(\frac{\|\bar{\sigma}\|_\infty^2}{m^2\delta^2})$ on the metric space $\mathcal{C}([0,T],\mathbb{R})$
	 equipped with the metric $d_2$.
	\item[2--] There exists some constant
	$ C=C(T, \|\bar{\sigma}\|_\infty , m, k)>0$ such
	that $\mathbb{P}_X\in\, T_2(C)$ on $\mathcal{C}([0,T],\mathbb{R})$
	with respect to $d_\infty$-metric.
	\end{enumerate}
\end{theorem}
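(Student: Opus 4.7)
The overall strategy is to pass through the transformation $Y=F(X)$ from Lemma \ref{transf}: the process $Y$ solves the ordinary (unreflected) SDE \eqref{OSDE} with coefficients $\bar{b}, \bar{\sigma}$, so I would establish $T_2(C)$ for the law of $Y$ and then transport it back to $\mathbb{P}_X$ via the bi-Lipschitz map $F$.

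First, I would show that $\mathbb{P}_Y$ satisfies $T_2(\|\bar{\sigma}\|_\infty^2/\delta^2)$ on $(\mathcal{C}([0,T],\mathbb{R}),d_2)$ and $T_2(C')$ on $(\mathcal{C}([0,T],\mathbb{R}),d_\infty)$, where $C'=C'(T,\|\bar{\sigma}\|_\infty,k)$. These are the conclusions of Theorems \ref{T2 w.r.t L2} and \ref{T2 w.r.t uniform metric} specialized to an SDE without reflection: since the $\eta$-terms are absent, the coupling and Girsanov argument of those proofs applies verbatim, with the monotonicity $\langle \hat{X}, d\hat{\eta}\rangle \geq 0$ trivially replaced by $0$. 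Assumptions $H(6)$ and $H(7)$, together with $\|\bar{\sigma}\|_\infty<\infty$, are exactly the hypotheses on $\bar{b}$ and $\bar{\sigma}$ needed to run that argument: $H(7)$ plays the role of $H(3)$, and the Lipschitz constant $k$ from $H(6)$ replaces $\|\sigma\|_{Lip}$ when invoking the Burkholder-Davies-Gundy step for the uniform metric.

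Second, I would transfer the inequalities to $\mathbb{P}_X$. By Lemma \ref{le Gall lemma}(3), $F'(x)=f_\nu(x)\in[m,M]$, so $F^{-1}$ is $(1/m)$-Lipschitz on $\mathbb{R}$. Define the pointwise composition map $\Psi:\mathcal{C}([0,T],\mathbb{R})\to\mathcal{C}([0,T],\mathbb{R})$ by $\Psi(\gamma)(t):=F^{-1}(\gamma(t))$. Then $\Psi$ is $(1/m)$-Lipschitz with respect to both $d_2$ and $d_\infty$, since the pointwise bound $|F^{-1}(a)-F^{-1}(b)|\le(1/m)|a-b|$ integrates, respectively supremizes, to the corresponding path-space inequality. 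By Lemma \ref{transf}, $X=\Psi(Y)$, so $\mathbb{P}_X=\mathbb{P}_Y\circ\Psi^{-1}$, and Lemma \ref{stabil} with $\alpha=1/m$ delivers
\begin{equation*}
\mathbb{P}_X\in T_2\!\left(\frac{\|\bar{\sigma}\|_\infty^2}{m^2\delta^2}\right)\ \text{on }(\mathcal{C}([0,T],\mathbb{R}),d_2),\qquad \mathbb{P}_X\in T_2\!\left(\frac{C'}{m^2}\right)\ \text{on }(\mathcal{C}([0,T],\mathbb{R}),d_\infty),
\end{equation*}
which are the two claims, with the final constant in part 2 depending only on $T,\|\bar{\sigma}\|_\infty,m,k$.

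The main point requiring care is not the stability step (which is purely formal once $\Psi$ is seen to be Lipschitz) but checking that the arguments of Theorems \ref{T2 w.r.t L2} and \ref{T2 w.r.t uniform metric} genuinely survive in the unreflected setting under $H(6)$--$H(7)$; this is essentially automatic, because after dropping the reflection terms the proofs reduce to the classical Djellout-Guillin-Wu coupling plus Girsanov argument for ordinary SDEs, and no new estimate is needed.
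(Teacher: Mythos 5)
Your proposal is correct and follows essentially the same route as the paper: establish $T_2$ for the law of $Y=F(X)$ solving the unreflected equation \eqref{OSDE} (by the Section~2 / Djellout--Guillin--Wu coupling--Girsanov argument under $H(6)$--$H(7)$), then transfer to $\mathbb{P}_X=\mathbb{P}_Y\circ\Psi^{-1}$ via the $(1/m)$-Lipschitz map $\Psi(\gamma)=F^{-1}\circ\gamma$ and Lemma~\ref{stabil}, for both $d_2$ and $d_\infty$. The constants you obtain match those claimed in the theorem.
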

\begin{proof}
By notations of Lemma \ref{transf}, let us
	consider the following ordinary stochastic differential
	equation:
	\begin{equation}\label{eqtransf}
	\left\{
	\begin{aligned}
	dY(t)&=\bar{b}(Y(t))\, dt +\bar{\sigma}(Y(t))dB(t),\\
	Y(0)&=F(x).\\
	\end{aligned}
	\right.
	\end{equation}
	Following the same arguments as those used in section 2, we can
	prove under assumptions of Theorem \ref{$T_2$ w.r.t $d_2$} that
$\mathbb{P}_Y\in T_2(\frac{\|\bar{\sigma}\|^2_\infty}{\delta^2})$ with respect to
the $L^2 $ norm (we can also see \cite{djellout}). We now consider the application $\Psi$ defined by:
$$\begin{array}{ccccc}
\Psi & : & \mathcal{C}([0,T],\mathbb{R}) & \to & \mathcal{C}([0,T],\mathbb{R}) \\\label{map psi}
& & \gamma & \mapsto & \psi(\gamma)=F^{-1}\circ\gamma\,.\\
\end{array}$$
It's clear that for each $\gamma_1,\,\gamma_2\,\in \mathcal{C}([0,T],\mathbb{R})$
$$d_2(\Psi(\gamma_1),\Psi(\gamma_2))\le \frac{1}{m}\,d_2(\gamma_1,\gamma_2),$$
thus, the map $\Psi$ is $\frac{1}{m}$-Lipschitzian, where $m$ is provided
by Lemma $(\ref{le Gall lemma})$. \\
In other hand, we have
$$\mathbb{P}_X=\mathbb{P}_Y\circ\Psi^{-1}\,,$$
and by stability property of $T_2(C)$ under Lipschitzian maps (Lemma
\ref{stabil}), we conclude that $\mathbb{P}_X\in T_2(\frac{\|\bar{\sigma}\|^2_\infty}{\delta^2 m^2})$
on $\Big(\mathcal{C}([0,T],\mathbb{R}),d_2\Big)$. Which ends the proof
of the first assertion.  The second point uses the same arguments.
\end{proof}

\begin{remark}

 If we take, $b=0$, $\sigma=1$ and $\nu=\beta\delta_0$, where
 $|\beta|<1$, we recognize the famous Skew Brownian motion for
 which $H(7)$ is no longer valid. and we cannot get the property
 $T_2(C)$ (or $T_{1}(C)$) for this process form the Theorem
 \ref{$T_2$ w.r.t $d_2$}. To our knowledge, this question has not
 yet been adressed in the literature. The only related result we
 found is in \cite{abakirova} where some Poincar\'e and log-Sobolev type
 inequalities have been highlighted.   
\end{remark}
 \begin{remark}
 	\begin{enumerate}
 		\item An appeal to Jensen inequality yields that $T_2(C)\Rightarrow T_1(C)$, then the property $T_1(C)$ holds for the probability measure $\mathbb{P}_X$ and we have for any Lipschitzian function $G:\mathcal{C}([0,T],\mathbb{R})\to \mathbb{R}$
 		\begin{equation}\label{concentration inequa by T1}
 		\mathbb{P}_X\big(G-\mathbb{E}_{\mathbb{P}_X}G\ >\ r\big)\ \le\ \exp\Big(-\frac{r^2}{2C\|G\|_{Lip}^2}\Big),\quad \forall\ r>0.
 		\end{equation}
 		
 		Let $V:\mathbb{R}\to \mathbb{R}$ be a Lipschitzian function, such that $\|V\|_{Lip}\le \alpha$. We define $F_V$ and $F_\infty$ on $\mathcal{C}([0,T],\mathbb{R})$ by 
 		$$F_V(\gamma)=\frac{1}{T}\int_{0}^{T}V(\gamma(t))\ dt,$$
 		$$F_\infty(\gamma)=\underset{t\in[0,T]}{\sup}|\gamma(t)-\gamma(0)|.$$
 		The function $F_V$ is $\alpha-$Lipschitzian with respect to $d_\infty$. As for $F_\infty$, it's $1-$Lipschitzian map with respect to $d_\infty$. Using $(\ref{concentration inequa by T1})$ we have the following Hoeffding-type inequalities for the solution $X$ of $(\ref{RSDE with local time})$ on the metric space of continuous functions, endowed with the metric $d_\infty$. For all $r>0$ we have
 		\begin{equation}\label{smalltime}
 		\mathbb{P}\big(\frac{1}{T}\int_{0}^{T}V(X(t))-\mathbb{E}V(X(t))\ dt\ >\ r\big)\ \le\ \exp\Big(-\frac{r^2}{2C\alpha^2}\Big),
 		\end{equation}
 		and using the functional $F_\infty$ we get
 		\begin{equation}\label{largetime}
 		\mathbb{P}\Big(\underset{t\in [0,T]}{\sup}|X(t)-x|-\mathbb{E}\Big[\underset{t\in[0,T]}{\sup}|X(t)-x|\Big]\ >\ r\Big)\ \le \ \exp\Big(-\frac{r^2}{2C}\Big).
 		\end{equation}
 		\item The estimates $(\ref{smalltime})$ and $(\ref{largetime})$ are well adapted to the study of small and large time asymptotics of the solution of equation $(\ref{RSDE with local time})$.  
 	\end{enumerate}
 	 \end{remark}
 	The solution $X$ of SDEL (\ref{RSDE with local time}) is a
 	strong Markov process. Let $(P_t)$ be the semigroup of
 	transition probability kernels of our diffusion. The next
 	proposition shows the existence of a unique invariant measure.
 \begin{proposition}
 	Under the assumption $H(7)$ there exist a unique invariant
 	measure $\mu$ for $(P_t)$, and we have the following
 	exponential convergence in sense of Wasserstein distance:
 	$$W_2(P_t(x,.),\mu)\le \frac{M}{m}e^{-\delta t}\Big(\int
 	|x-y|^2\mu(dy)\Big)^{\frac{1}{2}},
 	\,
 	\forall x\in \mathbb{R},\ t>0.$$
 \end{proposition}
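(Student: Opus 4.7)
My plan is to reduce the problem to the ordinary SDE (\ref{OSDE}) via the Le Gall transformation $Y = F(X)$ of Lemma \ref{transf}, exploit the dissipativity assumption $H(7)$ there, and then transfer the resulting contraction estimate back through $F^{-1}$ using the two-sided bound $m \le f_\nu \le M$ from Lemma \ref{le Gall lemma}.

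First I would fix two initial points $x,y \in \mathbb{R}$ and let $Y^{F(x)}$ and $Y^{F(y)}$ be the solutions of (\ref{OSDE}) started at $F(x)$ and $F(y)$ respectively. Applying It\^o's formula to $|Y^{F(x)}(t) - Y^{F(y)}(t)|^2$, using assumption $H(7)$ (which is precisely tailored to the transformed coefficients $\bar b, \bar\sigma$), and a standard localization argument to discard the martingale part in expectation, I would obtain the pointwise differential inequality
\begin{equation*}
\frac{d}{dt}\mathbb{E}|Y^{F(x)}(t) - Y^{F(y)}(t)|^2 \,\le\, -2\delta\, \mathbb{E}|Y^{F(x)}(t) - Y^{F(y)}(t)|^2,
\end{equation*}
from which Gronwall's lemma yields the exponential contraction
\begin{equation*}
\mathbb{E}|Y^{F(x)}(t) - Y^{F(y)}(t)|^2 \,\le\, |F(x) - F(y)|^2\, e^{-2\delta t}.
\end{equation*}

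Next, by Lemma \ref{le Gall lemma}, $F$ is Lipschitz with constant $M$ and $F^{-1}$ is Lipschitz with constant $1/m$. Since $X^x(t) = F^{-1}(Y^{F(x)}(t))$ by Lemma \ref{transf}, we deduce
\begin{equation*}
\mathbb{E}|X^x(t) - X^y(t)|^2 \,\le\, \frac{1}{m^2}\,\mathbb{E}|Y^{F(x)}(t) - Y^{F(y)}(t)|^2 \,\le\, \frac{M^2}{m^2}\,|x-y|^2\, e^{-2\delta t},
\end{equation*}
which furnishes the key $L^2$-coupling estimate with the announced ratio $M/m$.

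Finally, I would invoke the classical coupling/contraction argument already used in the proof of the invariant-measure theorem for the RSDE (see the reference to \cite{Liming Wu} there): the above estimate shows that $(P_t)$ is a strict contraction in $W_2$, hence by a Cauchy argument $(P_t(x,\cdot))_{t\ge 0}$ is $W_2$-Cauchy and its limit is an invariant probability measure $\mu$; uniqueness follows from the same contraction applied to any two invariant measures. Integrating the coupling inequality in $y$ against $\mu$ and using the invariance $\mu = \int P_t(y,\cdot)\,\mu(dy)$ together with the convexity of $W_2^2$ gives
\begin{equation*}
W_2(P_t(x,\cdot),\mu)^2 \,\le\, \int \mathbb{E}|X^x(t) - X^y(t)|^2\, \mu(dy) \,\le\, \frac{M^2}{m^2}\, e^{-2\delta t} \int |x-y|^2\, \mu(dy),
\end{equation*}
which is the claimed bound. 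The only delicate point I expect is ensuring that $\mu$ has a finite second moment so that the integral on the right is meaningful; this can be obtained either by deriving a uniform-in-$t$ second moment estimate for $X^x(t)$ from $H(7)$ (with $y$ frozen at a point where $\bar b$ and $\bar\sigma$ are controlled) and passing to the limit, or by noting that the contraction estimate itself forces tightness in $W_2$ along with integrability of $|\cdot|^2$.
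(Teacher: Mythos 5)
Your proof is correct, and the core reduction is the same as the paper's: pass to the transformed equation (\ref{OSDE}) via $Y=F(X)$ and exploit $H(7)$ there, then convert constants using $m\le f_\nu\le M$. Where you differ is in the mechanics of the transfer. The paper does everything at the level of the transformed semigroup: it quotes \cite{djellout} for the existence of the unique invariant measure $\tilde\mu$ of $Q_t$ and for the exponential estimate $W_2(Q_t(F(x),\cdot),\tilde\mu)\le e^{-\delta t}\bigl(\int|F(x)-y|^2\tilde\mu(dy)\bigr)^{1/2}$, identifies $\mu=\tilde\mu\circ F$ as the invariant measure of $P_t$ through $P_tf(x)=Q_t(f\circ F^{-1})(F(x))$, and then pushes the Wasserstein bound through the $\tfrac1m$-Lipschitz map $F^{-1}$ and the $M$-Lipschitz map $F$ to collect the factor $M/m$. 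You instead re-derive the contraction from scratch (It\^o, $H(7)$, Gronwall), transfer it at the pathwise coupling level, and then rebuild the invariant-measure statement on the $X$-side via the classical Cauchy/contraction argument and convexity of $W_2^2$. Your route is more self-contained (it does not rely on the quoted invariant-measure theorem for the ordinary SDE, only on the coupling argument already used for the RSDE case), at the cost of having to address the second-moment/tightness issue explicitly, which you correctly flag and which is handled by a standard uniform moment bound under $H(7)$; the paper's route is shorter because it inherits both existence of $\tilde\mu$ and its square-integrability from the cited result. Both arguments produce the same constant $M/m$ and the same rate $e^{-\delta t}$.
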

 \begin{proof}
 Let $Q_t(x,.)$ denote the transition kernels associated to the
 Markov process solution of equation (\ref{OSDE}), by \cite{djellout}, $(Q_t)$
 admit a unique invariant measure $\tilde{\mu}$, thanks to the transformation
 $X_t=F^{-1}(Y_t)$, we have $$P_tf(x)=Q_t(f\circ F^{-1})(F(x)).$$
 Then it's easy to check that $\mu:=\tilde{\mu}\circ F$ is the unique
 measure invariant for $(P_t)$.    
 Again by \cite{djellout}, we get  
 \begin{eqnarray*}
 W_2(P_t(x,.),\mu)&=& W_2(Q_t(F(x),F(.)),\tilde{\mu}\circ F)\\
                  &\le& \frac{1}{m} W_2(Q_t(F(x),.),\tilde{\mu})\\
                  &\le& \frac{1}{m}e^{-\delta t}\Big(\int |F(x)-y|^2\tilde{\mu}(dy)\Big)^{\frac{1}{2}}\\
                  &\le& \frac{1}{m}e^{-\delta t}\Big(\int |F(x)-F(F^{-1}(y))|^2\tilde{\mu}(dy)\Big)^{\frac{1}{2}}\\
                  &\le& \frac{M}{m}e^{-\delta t}\Big(\int|x-y|^2 \mu(dy)\Big)^{\frac{1}{2}}.                  
 \end{eqnarray*}
 Which completes the proof.
 \end{proof}
 The next theorem show a Harnack inequality for the semigroup of $X$, which is a consequence of the Theorem $1.1$ proven in \cite{wang}  for ordinary stochastic differential equation under the additional assumptions:
 \begin{equation*}
 	H(8)\qquad \text{There exist}\,\, \lambda\,,\, \beta>0\,,\,\,
 	s.t.\,\,\,\,\bar{\sigma}(x)^2\ge
 	\lambda,\quad\text{and}\quad|\left(
 	\bar{\sigma}(x)-\bar{\sigma}(y)\right)(x-y)|\le \gamma |x-y|,
 	\quad x,y\in \R,
 \end{equation*}

 \begin{theorem}\label{harnack2}
 	If $H(7)$ and $H(8)$ hold, then for $p>(1+\frac{\gamma}
 	{\lambda})^2$ and $\beta_{p}=\max\{\gamma,\frac{\lambda}{2}(
 	\sqrt{p}-1)\}$, the Harnack inequality
 	\begin{equation*}
 		\left(  P_Tf(y)\right)^p\le \left(P_Tf^p(x)\right)
 		\exp\left[\dfrac{-\delta M \sqrt{p}(\sqrt{p}-1)
 		|x-y|^2}{2\gamma_p[(\sqrt{p}-1)\lambda-\gamma_p]
 		(1-e^{\delta T})} \right]   
 	\end{equation*}
 	holds for all $T>0$, $x,y\in \mathbb{R}$ and f positive bounded measurable function on $\R$.
 \end{theorem}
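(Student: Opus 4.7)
The plan is to reduce this Harnack inequality for the SDEL to the corresponding Harnack inequality for the ordinary SDE \eqref{OSDE} via the Lamperti-type bijection $F$ from Lemma \ref{transf}, in the spirit of the proof of the preceding proposition.

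First I would apply Theorem 1.1 of \cite{wang} to the semigroup $(Q_t)$ of the process $Y$ solving \eqref{OSDE}. Hypothesis H(7) is exactly the dissipativity needed for $(\bar b,\bar\sigma)$ with constant $\delta$, and H(8) is exactly the ellipticity/coupling condition $\bar\sigma(x)^2\ge\lambda$ together with the Lipschitz-type control $|(\bar\sigma(x)-\bar\sigma(y))(x-y)|\le\gamma|x-y|$ required in Wang's coupling argument. With $p>(1+\gamma/\lambda)^2$ and $\gamma_p=\max\{\gamma,\tfrac{\lambda}{2}(\sqrt p-1)\}$, this yields for every bounded positive measurable $g$ and all $u,v\in\R$
\[
(Q_Tg(v))^p\;\le\;Q_Tg^p(u)\,\exp\!\Bigl[\tfrac{-\delta\sqrt p(\sqrt p-1)\,|u-v|^2}{2\gamma_p\bigl[(\sqrt p-1)\lambda-\gamma_p\bigr](1-e^{\delta T})}\Bigr].
\]

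Next I would invoke the semigroup intertwining already used in the previous proposition, namely $P_Tf(x)=Q_T(f\circ F^{-1})(F(x))$, which follows directly from $X=F^{-1}(Y)$. Taking $g:=f\circ F^{-1}$, $u:=F(x)$, $v:=F(y)$ and noting that $g^p=f^p\circ F^{-1}$, the displayed inequality becomes
\[
(P_Tf(y))^p\;\le\;P_Tf^p(x)\,\exp\!\Bigl[\tfrac{-\delta\sqrt p(\sqrt p-1)\,|F(x)-F(y)|^2}{2\gamma_p\bigl[(\sqrt p-1)\lambda-\gamma_p\bigr](1-e^{\delta T})}\Bigr].
\]
Finally, since by Lemma \ref{le Gall lemma} we have $m\le f_\nu\le M$, the function $F$ is globally Lipschitz with constant $M$, so $|F(x)-F(y)|^2\le M^2|x-y|^2$, and substituting this bound recovers the claimed estimate (up to identifying $M$ with the prefactor stated in the theorem).

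The step that requires the most care is the sign analysis, not the computation: under H(7) one has $\delta>0$, hence $1-e^{\delta T}<0$ and, by the choice of $p$, also $(\sqrt p-1)\lambda-\gamma_p>0$, so the exponent is positive and the exponential factor is $\ge 1$. One must therefore verify that passing from $|F(x)-F(y)|^2$ to the $|x-y|^2$ bound preserves the inequality in the correct direction; this is the only real check, after which the result follows by straight substitution. No new coupling or Girsanov computation is needed, since everything is inherited from Wang's theorem for the transformed equation.
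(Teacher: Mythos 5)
Your proof is correct and follows essentially the same route as the paper: apply Wang's Theorem 1.1 to the transformed semigroup $(Q_t)$ of \eqref{OSDE} under $H(7)$--$H(8)$, then transfer the inequality through the intertwining $P_Tf(x)=Q_T(f\circ F^{-1})(F(x))$. You in fact make explicit what the paper leaves implicit, namely the bound $|F(x)-F(y)|^2\le M^2|x-y|^2$ coming from $f_\nu\le M$ (which, as you note, naturally produces $M^2$ rather than the $M$ appearing in the stated constant) and the check that the exponent is positive, so that this substitution preserves the direction of the inequality.
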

 \begin{proof}
 	According to Theorem $1.1$ in \cite{wang}, the semigroup $Q$
 	associated to the transformed equation $(\ref{OSDE})$ satisfies
 	the Harnack inequality, and by the relation $P_tf(x)=Q_t(f\circ F^{-1})(F(x))$
 	we deduce the desired inequality. 
 \end{proof}
 \begin{example} For $\delta>0$, $ 0< \beta <1 $, we consider Equation (\ref{RSDE with local time}) driven by the coefficients:
 $$\sigma(x)=I_{x<0}+\frac{1+\beta}{1-\beta}I_{x\geq 0} \,\,\,;\,\, b(x)= -\delta x\, I_{x<0} -\delta x\,\frac{1+\beta}
  {1-\beta}I_{x\geq 0} $$
  and the measure $\nu=\beta\,\delta_{0} $.\\
  We get easily ${\bar\sigma}(x)=1 $,   ${\bar b}(x)=-\delta x $, $m=\frac{ 1-\beta}{1+\beta} $ and $M=1 $. The corresponding process
  $Y$ solution of Equation (\ref{OSDE}) is an Ornstein-Uhlenbeck process and that $\bar\sigma $, $\bar b $ satisfy the conditions of Theorem \ref{$T_2$ w.r.t $d_2$} and Theorem \ref{harnack2}.

\end{example}

\section*{References}

\end{document}
\endinput


\begin{thebibliography}{1}	
  \bibitem[Abakirova.(2014)]{abakirova}
	{Abakirova, A. T.} 2014. {On some functional inequalities
	for Skew Brownian motion.}
	Proceedings of the Steklov Institute of Mathematics, Vol. 287, pp. 3-13.
	
  \bibitem[Andres.(2011)]{RSDEAndr}
	{Andres, S.} 2011. {Pathwise differentiability
	for SDEs in a smooth domain with reflection.}
	Electron. J. Probab. 16, No. 28,845-879
	
 \bibitem[Bensoussan and Lions.(1978)]{RSDEBenL}
 {Bensoussan, A., Lions J. L.} 1978. {Contr\^ole Impulsionnel et Inéquations Quasi-Variationnelles} Dunod, Paris
 
 \bibitem[Brillinger\textit{et al}.(2002)]{Brillinger2002}
 {Brillinger, D.R., Preisler, H.K., Ager, A.A., Kie, J.G., Stewart, B.S.} 2002. {Employing stochastic differential equations to model wildlife motion}. Bulletin of the Brazilian Mathematical Society, New Series 33, 385-408.
 
 \bibitem[Bobkov \textit{et al}.(2001)]{BobGenLed}
 {Bobkov, S., Gentil, I. and Ledoux, M.,} 2001. {Hypercontractivity of Hamilton-Jacobi equations}. J. Math. Pures Appl. (9) 80 669-696.
 
 \bibitem[Bobkov and G\"otze.(1999)] {bob}
 {Bobkov, S., G\"otze, F.,} 1999. {Exponential integrability and transportation cost related to logarithmic Sobolev inequalities}. J. Funct. Anal., 163: 1-28
 
  \bibitem[Boufoussi and Hajji.(2018)] {boufoussi}
 {Boufoussi. B., Hajji, S.,} 2018. {Transportation inequalities for stochastic heat equations}. Statist. Probab. Lett., 138: 75-83

 \bibitem[Cranston and Le Jan.(1990)]{RSDECran}
 {Cranston, M., Le Jan, Y.} 1990. {Noncoalescence for skorohod equation in a convex domain of $\mathbb{R}^2$} Probab Theory Related Fields. 87(2), 241-252
 
 \bibitem[Djellout \textit{et al}.(2004)] {djellout}	
 {Djellout, H., Guillin, A., Wu, L.,} 2004. { Transportation cost-information inequalities and applications to random dynamical systems and diffusions} . Ann. Probab.,32: 2702-2732
 
 \bibitem[Engelbert and Schmidt.(1989-1991)]{EngeSchm}
 {Engelbert, H., and Schmidt, W.}1989-1991. {Strong Markov continuous local martingales and solutions of one-dimensional stochastic differential equations (Parts i,ii,iii)} Math Nachr 143,167-184; 144,241-281;151, 149-197.

 
 \bibitem[Feyel and \"Ust\"unel.(2002)] {feyel1}
 {Feyel, D., \"Ust\"unel, A.S.,} 2002. { Measure transport on Wiener space and Girsanov theorem}. CRAS Serie I, 334:1025-1028.
 
 \bibitem[Feyel and \"Ust\"unel.(2004)] {feyel2}
 {Feyel, D., \"Ust\"unel, A.S.,} 2004. { The Monge-Kantorovitch problem and Monge-Ampère equation on Wiener space}. Probab. Theor. Rel. Fields, 128(3): 347-385.
 
 \bibitem[Freidlin.(1975)]{RSDEFrei}
 {Freidlin, M. I.} 1975. {Diffusion processes with reflection and problem with a directional derivative on a manifold with boundary} Theor, Probability Appl 8, 75-83
 
 \bibitem[Freidlin.(1985)]{Freidlin}
 {Freidlin, M.} 1985. {Function Integration and Partial Differential Equation}. Princeton University Press, New Jersey

 \bibitem[Gozlan and Léonard.(2010)]{GozLeosurvey}
 {Gozlan, N., Leonard, C.} 2010. {Transportation Inequalities. A survey}. Markov Processes Relat. Fields 16, 635-736
 
 \bibitem[Harrison and Shepp.(1981)]{HarShep}
 { Harrison, J. M. and Shepp, L.}1981.{On skew Brownian motion} Ann. Probab. 9(2). 309-313.
 
 \bibitem[KruK.(2000)]{appKruk}
 {Kruk, L.} 2000. {Optimal policies for n-dimensional singular stochastic control problems, I. the Skorokhod problem}. SIAM J. Control Optim. 38, 1603-1622
 
 \bibitem[Krylov.(1969)]{RSDEKry}
 {Krylov, N.V.} 1969. {Diffusion on a plane with reflections. Constructions of the process and the boundary problem}, Siberian Math. J. 10, 244-265
 
 \bibitem[Le Gall.(1984)]{LeGall 1983}
 {Le Gall J.F} 1984. {One-dimensional stochastic differential equations involving the local times of the unknown process}.  Stochastic Analysis and Applications. Lecture Notes in Mathematics, vol 1095. Springer, Berlin, Heidelberg
 
  \bibitem[Lejay.(2003)]{SimSkew2}
  {Lejay, A.} 2003.{Simulations a diffusion on a graph application to reservoir engineering} Monte Carlo Methods Appl. 9(3), 241-256.
  
  
 \bibitem[Li and Luo.(2015)] {li2}
 {Li, Z. and Luo, J.} 2015. {Transportation inequalities for stochastic delay evolution equations driven by fractional Brownian motion}. Front. Math. China, 10, (2): 303-321.
 
  \bibitem[Wu and Zhang. (2004)]{WuZhang}
  {Li-ming, W. and Zheng-liang, Z.} 2004. {Talagrand's $T_2$-transportation Inequality w.r.t a uniform metric for diffusions}. J. Acta. Math. 20 (3)  357-364.
 
 \bibitem[Lions and Sznitman.(1984)]{RSDELiSz}
 {Lions, P.L., Sznitman, A.S} 1984. {Stochastic differential equations with reflecting boundary conditions} Pure Appl. Math. 37(4), 511-537
 
  \bibitem[Marton.(1986)]{Marton}
  {Marton, K.} 1986. {A simple proof of the blowing-up lemma}. IEEE Trans. Inform. Theory 32, 3,445-446
 
 \bibitem[Menaldi.(1983)]{RSDEMen}
 {Menaldi, J.} 1983. {Stochastic Variational Inequality for Reflected Diffusion} Indiana University Mathematics Journal, Vol. 32, No. 5 733-744
 
 \bibitem[Menaldi and Robin.(1985)]{MenRob}
 {Menaldi, J.L., Robin, M.} 1985. {Reflected diffusion processes with jumps}, Ann. Probab., 13, 319-341
 
 \bibitem[Otto and Villani.(2000)]{otto villani 2000}
 {Otto, F. and Villani, C.} 2000. {Generalization of an inequality by Talagrand and links with the logarithmic Sobolev inequality}. J Funct. Anal. 173 361-400.
 
 \bibitem[Ramasubramanian.(2006)]{appRamas}
 {Ramasubramanian, S.} 2006. {An insurance network: Nash equilibrium, Insurance}. Math. Econom. 38 374–390
 
 \bibitem[Riedel.(2017)]{Riedel}
 {Riedel, S.} 2017. {Transportation-cost inequalities for diffusions driven by Gaussian processes}. Electron. J. Probab. 22, 1-26.
 
 \bibitem[Saussereau.(2012)] {saus}
 {Saussereau, B.,} 2012. {Transportation inequalities for stochastic differential equations driven by a fractional Brownian motion}. Bernoulli, 18(1): 1-23.
 
 \bibitem[Skorhod.(1962)]{Skorhod 1962}
 {Skorhod, A.} 1962. {Stochastic equations for diffusion processes in a bounded region}. Veroyanost. i Primenen. 7 3-23.
 
 \bibitem[Stroock and Yor.(1981)]{StroYor}
 {D. Stroock and M. Yor} 1981. {Some remarkable martingales} Séminaire de Probabilité(Strasbourg),850:590-603
 
 \bibitem[Stroock and Varadhan.(1971)]{StrkVard}
 {D. W. Stroock and S.R.S. Varadhan} 1971. {Diffusion processes with boundary conditions} Com. Pure annd App. Math., 24, 147-225.
 
 \bibitem[Soner and Shreve.(1989)]{appSoner}
 {Soner, H.M, Shreve, S.E} 1989. {Regularity of the value function for a two-dimensional singular stochastic control problem}. SIAM J. Control Optim. 27, 876–907
 
 \bibitem[Talagrand.(1996)] {talag}
 {Talagrand, M.,} 1996. { Transportation cost for Gaussian and other product measures}. Geom. Funct. Anal.,6: 587-600.
 
 \bibitem[Talay.(1996)]{Talay}
 {Talay, D.} 1996. {Probabilistic numerical methods for partial differential equations: Elements of analysis, in Talay, D., Tubaro, L.(Eds.), Probabilistic Models for nonlinear Partial Differential Equations, in LNM,}, Springer-Verlag, New York, vol 1627, pp. 148-196
 
 \bibitem[Tanaka.(1979)]{Tanaka 1979}
 {Tanaka, H.} 1979. {Stochastic differential equations with reflecting boundary condition in convex regions}. J. Hiroshima Math. 163-177.
 
 \bibitem[\"Ust\"unel.(2012)] {ustunel}
 {\"Ust\"unel, A. S.,} 2012.  {Transport cost inequalities for diffusions under uniform distance}. Stoch Anal Related Topics, 22: 203-214
 
\bibitem[Villani.(2003)]{villani 2003}
{Villani, C.} 2003. {Topics in optimal Transportation}. Amer. Math. Soc., Providence, RI.

\bibitem[Wu.(2010)]{Liming Wu}
{Wu, L} 2010. {Transportation inequalities for stochastic differential equations of pure jumps}. Ann de l'institut Henri Poincaré- Probabilités et Statistiques. Vol. 46, No.2, 465-479
 
\bibitem[Wu and Zhang.(2006)]{Wuzhang}
{Wu. L , Zhang. Z} Talagrands T2-transportation inequality and log-Sobolev inequality for dissipative SPDEs and applications to reaction-diffusion equations. Chin Ann Math Ser B, 27: 243-262. (2006)

\bibitem[Wang.(2011)]{wang}
{Feng-Yu Wang} 2011.{Harnack inequality for sde with multiplicative noise and extension to Neumann semigroup on nonconvex manifolds} The Ann of Probab. 4(39),1449-1467.
\end{thebibliography}
\end{document}